\documentclass[12pt,a4paper]{amsart}
\allowdisplaybreaks[1]

\usepackage{amstext}
\usepackage{amsthm}
\usepackage{amssymb}
\usepackage{ytableau}
\usepackage{tikz}
\usepackage[all]{xy}
\usepackage[top=30truemm,bottom=30truemm,left=25truemm,right=25truemm]{geometry}

\DeclareFontEncoding{OT2}{}{}
\DeclareFontSubstitution{OT2}{cmr}{m}{l}

\DeclareFontFamily{OT2}{cmr}{\hyphenchar\font45}
\DeclareFontShape{OT2}{cmr}{m}{l}{%
<5><6><7><8><9>gen*wncyr%
<10><10.95><12><14.4><17.28><20.74><24.88>wncyr10}{}

\DeclareMathAlphabet{\mathcyr}{OT2}{cmr}{m}{l}

\newtheorem{thm}{Theorem}[section]
\newtheorem{lem}[thm]{Lemma}
\newtheorem{prop}[thm]{Proposition}

\theoremstyle{definition}
\newtheorem{defn}[thm]{Definition}

\theoremstyle{remark}
\newtheorem{rem}[thm]{Remark}

\newcommand{\sh}{\mathbin{\mathcyr{sh}}}
\newcommand{\CSF}{\mathrm{CSF}}
\newcommand{\bk}{\boldsymbol{k}}
\newcommand{\bl}{\boldsymbol{l}}
\newcommand{\bp}{\boldsymbol{p}}
\newcommand{\bQ}{\mathbb{Q}}
\newcommand{\bR}{\mathbb{R}}
\newcommand{\bZ}{\mathbb{Z}}

\newcommand{\cR}{\mathcal{R}}
\newcommand{\cS}{\mathcal{S}}
\newcommand{\cZ}{\mathcal{Z}}
\newcommand{\fH}{\mathfrak{H}}
\newcommand{\fP}{\mathfrak{P}}

\DeclareMathOperator{\wt}{wt}

%%%%%%%%%%%%%%%%%%%%%%%%%%%%%%%%%%%%%%%%%%%%%%%%%%%%%%%%%%%%%%%%%%%%%%%%%%%%%%%%%%%%%%%%%%%%%%%%%%%%%%%
\begin{document}

\title[$t$-adic SMZSV and CSF]{On variants of symmetric multiple zeta-star values and the cyclic sum formula}

\author{Minoru Hirose}
\address[Minoru Hirose]{Faculty of Mathematics, Kyushu University 744, Motooka, Nishi-ku,
Fukuoka, 819-0395, Japan} 
\email{m-hirose@math.kyushu-u.ac.jp}

\author{Hideki Murahara}
\address[Hideki Murahara]{Nakamura Gakuen University Graduate School,
 5-7-1, Befu, Jonan-ku, Fukuoka, 814-0198, Japan} 
\email{hmurahara@nakamura-u.ac.jp}

\author{Masataka Ono}
\address[Masataka Ono]{Multiple Zeta Research Center, Kyushu University, 744, Motooka, Nishi-ku,
Fukuoka, 819-0395, Japan} 
\email{m-ono@math.kyushu-u.ac.jp}

\keywords{Multiple zeta(-star) values, Finite multiple zeta(-star) values, Symmetric multiple zeta(-star) values, Cyclic sum formula}
\subjclass[2010]{Primary 11M32; Secondary 05A19}
\thanks{This research was supported in part by JSPS KAKENHI Grant Numbers 16H06336, JP18J00982 and JP18K13392.}

\begin{abstract}
The $t$-adic symmetric multiple zeta values were defined by Jarossay, which have been studied as a real analogue of the $\boldsymbol{p}$-adic finite multiple zeta values.
In this paper, we consider the star analogues based on several regularization processes of multiple zeta-star values:
harmonic regularization, shuffle regularization, and Kaneko-Yamamoto's type regularization.

We also present the cyclic sum formula for $t$-adic symmetric multiple zeta(-star) values, which is the counterpart of that for $\boldsymbol{p}$-adic finite multiple zeta(-star) values obtained by Kawasaki. 
The proof uses our new relationship that connects the cyclic sum formula for $t$-adic symmetric multiple zeta-star values and that for the multiple zeta-star values.

\end{abstract}

\maketitle

%%%%%%%%%%%%%%%%%%%%%%%%%%%%%%%%%%%%%%%%%%%%%%%%%%%%%%%%%%%%%%%%%%%%%%%%%%%%%%%%%%%%%%%%%%%%%%%%%%%%%%%
\section{Introduction}
%%%%%%%%%%%%%%%%%%%%%%%%%%%%%%%%%%%%%%
\subsection{Multiple zeta(-star) values and $t$-adic symmetric multiple zeta(-star) values}

For positive integers $k_1,\dots,k_r$ with $k_r \ge2$, the multiple zeta values (MZVs) and the multiple zeta-star values (MZSVs) are the real numbers defined by 
\begin{align*}
\zeta(k_1,\dots, k_r)
:=\sum_{1\le n_1<\cdots <n_r} \frac{1}{n_1^{k_1}\cdots n_r^{k_r}}, \quad
\zeta^\star (k_1,\dots, k_r):=\sum_{1\le n_1\le \cdots \le n_r} \frac{1}{n_1^{k_1}\cdots n_r^{k_r}}.
\end{align*} 
In the following, we call the tuple $(k_1, \ldots, k_r)$ of positive integers an index. 
We denote the index with $r=0$ by $\varnothing$ and we call it the empty index, and we understand $\zeta(\varnothing)=\zeta^{\star}(\varnothing)=1$. 
We also call an index $\bk=(k_1, \ldots, k_r)$ admissible if $\bk=\varnothing$ or $k_r\ge2$. 
For an index $\bk=(k_1, \ldots, k_r)$, the quantity $k_1+\cdots+k_r$ is called the weight of $\bk$ and we denote it by $\wt(\bk)$. 
Note that we set $\wt(\varnothing):=0$.

Let $\mathcal{Z}$ be the $\mathbb{Q}$-linear subspace of $\mathbb{R}$ spanned by $1$ and all multiple zeta values. 
Note that $\cZ$ becomes a $\bQ$-algebra.
For an index $(k_1,\dots,k_r)$, the $t$-adic symmetric multiple zeta values ($t$-adic SMZVs) are defined 
as elements in $ \cZ[[t]]$ by
\begin{align*}
\zeta^{\bullet}_{\widehat{\cS}}(k_1, \ldots, k_r)
:=&\sum_{i=0}^r(-1)^{k_{i+1}+\cdots+k_r}\zeta^{\bullet}(k_1, \ldots, k_i)\\
&\times \sum_{l_{i+1}, \ldots, l_r\ge0}\prod_{j=i+1}^r\binom{k_j+l_j-1}{l_j}
\zeta^{\bullet}(k_r+l_r, \ldots, k_{i+1}+l_{i+1})t^{l_{i+1}+\cdots+l_r}\\
&\quad\qquad\qquad\qquad\qquad\qquad\qquad\qquad\qquad\qquad\qquad\qquad\quad (\bullet \in \{\ast, \sh\}),
\end{align*}
which were introduced by Jarossay in \cite{Jar19} as a counterpart of $\bp$-adic finite multiple zeta values ($\bp$-adic FMZVs)
(see also \cite{Kan19} and \cite{KZ19} for ordinary FMZVs and SMZVs,
and see \cite{Ros15} and \cite{Sek17} for $\bp$-adic FMZVs). 
Here, the symbol $\zeta^\bullet$ on the right-hand side means the regularized values coming from the harmonic `$\ast$' or the shuffle `$\sh$' regularizations, 
i.e., real values obtained by taking constant terms of these regularizations 
as explained in \cite{IKZ06}.
Remark that $t$-adic SMZV is called $\Lambda$-adjoint multiple zeta values in \cite{Jar19}.

In this paper, we consider 3 types of $t$-adic symmetric multiple zeta-star values ($t$-adic SMZSVs) as elements in $\cZ[[t]]$ motivated by the previous works on MZSVs by Muneta \cite{Mun09}, Yamamoto \cite{Yam17}, and Kaneko-Yamamoto \cite{KY18}. 
For an index $(k_{1},\dots,k_{r})$, let
\[
\zeta^{\star, \bullet}(k_1,\dots, k_r)
:=\sum_{ \substack{\square \textrm{ is either a comma `,' } \\ \textrm{ or a plus `+'}} } 
\zeta^\bullet(k_1\square \cdots \square k_r)
\quad (\bullet \in \{\ast, \sh\}). 
\]
\begin{defn}[$t$-adic SMZSVs]
For an index $(k_{1},\dots,k_{r})$, let
\begin{align*}
\zeta_{\widehat{\cS}}^{\star,\bullet}(k_1,\dots,k_r)
&:=\sum_{i=0}^{r}(-1)^{k_{i+1}+\cdots+k_r}\zeta^{\star, \bullet}(k_1, \ldots, k_i)\\
&\times \sum_{l_{i+1}, \ldots, l_r\ge0}\prod_{j=i+1}^r\binom{k_j+l_j-1}{l_j}
\zeta^{\star, \bullet}(k_r+l_r, \ldots, k_{i+1}+l_{i+1})t^{l_{i+1}+\cdots+l_r}\\
&\quad\qquad\qquad\qquad\qquad\qquad\qquad\qquad\qquad\qquad\qquad\qquad\quad (\bullet \in \{\ast, \sh, KY\}),
\end{align*}
where the symbol $\zeta^{\star,KY}$ means the shuffle regularized values obtained by Yamamoto's integral representation of MZSVs (for precise definition, see Section 3.3).
\end{defn}
\begin{rem}
With a simple calculation, we have
\begin{align} \label{eq:tSMZSV by tSMZV}
\zeta^{\star, \bullet}_{\widehat{\cS}}(k_1, \ldots, k_r)=\sum_{ \substack{\square \textrm{ is either a comma `,' } \\ \textrm{ or a plus `+'}} } 
\zeta_{\widehat{\cS}}^\bullet(k_1\square \cdots \square k_r)
\end{align}
for $\bullet \in \{\ast, \sh\}$.
\end{rem}

%%%%%%%%%%%%%%%%%%%%%%%%%%%%%%%%%%%%%%
\section{Main results}
%%%%%%%%%%%%%%%%%%%%%%%%%%%%%%%%%%%%%%%%%%%%%%%%%%%
\subsection{Relations among various zeta-star values}
The first result we introduce in this paper is the congruence between $\zeta^{\star, \sh}(\bk)$ and $\zeta^{\star, KY}(\bk)$.

\begin{thm}\label{main1}
For an index $\boldsymbol{k}$, we have
\begin{align*}
\zeta^{\star,\sh} (\boldsymbol{k})
&\equiv \zeta^{\star,KY} (\boldsymbol{k}) 
\pmod{\zeta(2)\cZ}.
\end{align*}
\end{thm}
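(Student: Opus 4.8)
The plan is to work entirely on the side of regularized values and to exploit Yamamoto's iterated-integral model for the star sum. Write $x=\frac{dt}{t}$ and $y=\frac{dt}{1-t}$, and let $\fH^1$ be the usual space of words ending in $y$. Recall that $\zeta(\bk)$ is the iterated integral of $x^{k_r-1}y\,x^{k_{r-1}-1}y\cdots x^{k_1-1}y$, and that Yamamoto's representation realizes $\zeta^{\star}(\bk)$ as the iterated integral of
\[
w_\star(\bk)=x^{k_r-1}(x+y)\,x^{k_{r-1}-1}(x+y)\cdots(x+y)\,x^{k_1-1}y,
\]
that is, with every internal separator $y$ replaced by $x+y=\frac{dt}{t(1-t)}$; expanding the $r-1$ factors $(x+y)$ recovers exactly $\sum_{\square}\zeta(k_1\square\cdots\square k_r)$. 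The first reduction is that for admissible $\bk$ both $\zeta^{\star,\sh}(\bk)$ and $\zeta^{\star,KY}(\bk)$ equal the convergent value $\zeta^{\star}(\bk)$, so it suffices to treat non-admissible $\bk$; the divergence then sits entirely at the top of the integral (near $t=1$), where the leading factor is $x+y$ precisely when $k_r=1$.

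The engine of the argument is the recursion coming from that top letter. Writing $\bk=(\bl',1)$ and letting $\bl^{+}$ be the index obtained by adding $1$ to the last entry of $\bl'$, one has in $\fH^1$
\[
w_\star(\bl',1)=(x+y)\,w_\star(\bl')=w_\star(\bl^{+})+y\,w_\star(\bl').
\]
Since the naive star regularization is $\bQ$-linear, $\zeta^{\star,\sh}(\bk)$ is the constant term of $\mathrm{reg}^{\sh}\!\big(w_\star(\bk)\big)$, so the two quantities in the theorem would be \emph{equal} were the Kaneko--Yamamoto regularization $\mathrm{reg}^{KY}$ (from Section 3.3) literally the same map as the standard shuffle regularization $\mathrm{reg}^{\sh}$ on $\fH^1$. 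Hence the entire content of Theorem~\ref{main1} is the discrepancy between the two regularizations on the divergent part $y\,w_\star(\bl')$. I would set $D(\bk):=\zeta^{\star,KY}(\bk)-\zeta^{\star,\sh}(\bk)$, note that $D$ vanishes on admissible indices, and prove $D(\bk)\in\zeta(2)\cZ$ by induction on the number of trailing $1$'s, peeling off one top factor $x+y$ at a time via the recursion above (the $w_\star(\bl^{+})$ summand has leading letter $x$, so the two regularizations agree on it and it contributes nothing to $D$).

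The crux is the inductive step: one must make $\mathrm{reg}^{KY}$ on the leading block explicit and compare it, term by term, with $\mathrm{reg}^{\sh}$ of $y\,w_\star(\bl')$. I expect the discrepancy $\Delta:=\mathrm{reg}^{KY}-\mathrm{reg}^{\sh}$ to be a genuine boundary term produced by regularizing the combined form $x+y=\frac{dt}{t(1-t)}$ against the remaining $y$'s at the top, and the key computation is that this term factors as $\zeta(2)$ times a $\bQ$-linear combination of regularized multiple zeta values of weight $\wt(\bk)-2$. The simplest instance, which I expect to be the prototype of every correction, is that the double integral of the top two forms differs from $\zeta^{\sh}(1,1)=\tfrac12T^2$ exactly by the convergent value $\zeta(2)$ (closely related to the harmonic--shuffle gap $\zeta^{\sh}(1,1)-\zeta^{\ast}(1,1)=\tfrac12\zeta(2)$). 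Because each correction lowers the weight by $2$ and lands in $\cZ$, the resulting products stay inside the ideal $\zeta(2)\cZ$, and summing the contributions from all trailing $1$'s closes the induction.

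The step I expect to be the main obstacle is exactly this factorization. A priori the gap between two regularizations of the same divergent word is governed by a comparison series analogous to the Ihara--Kaneko--Zagier map, whose coefficients involve all of $\zeta(2),\zeta(3),\zeta(4),\dots$; the non-trivial point is that Yamamoto's integral is arranged so that, modulo $\zeta(2)\cZ$, only the $\zeta(2)$-contribution survives, with the higher $\zeta(n)$ corrections either cancelling or being absorbed into terms that already carry a factor of $\zeta(2)$. Verifying this cleanly, rather than by brute force on the regularization polynomials, is where I would concentrate the effort: most likely by identifying an explicit correction element $c\in\fH^1$ for which $\mathrm{reg}^{KY}$ equals $\mathrm{reg}^{\sh}$ precomposed with shuffle-multiplication by $c$, and then showing directly that $\mathrm{reg}^{\sh}$ of the difference $(c$-shuffled$)\,w_\star(\bk)-w_\star(\bk)$ lies in $\zeta(2)\cZ$.
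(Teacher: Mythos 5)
Your reduction to non-admissible indices and your diagnosis of where the content lies are both correct: the theorem is exactly the statement that the discrepancy between $Z^{\sh}(S(z_{\bk});0)$ (Muneta's star word, your $w_\star(\bk)$) and $Z^{\sh}(w^{\star}(\bk);0)$ (the word coming from Yamamoto's 2-poset) lies in $\zeta(2)\cZ$. But the proposal stops at precisely the point where the proof has to happen. You write that you \emph{expect} the comparison series to contribute only its $\zeta(2)$-part modulo $\zeta(2)\cZ$, with the $\zeta(3),\zeta(5),\dots$ corrections ``either cancelling or being absorbed,'' and you defer the verification; that cancellation \emph{is} the theorem, and nothing in the proposal establishes it. The inductive scheme (peeling off trailing $1$'s via $w_\star(\bl',1)=(x+y)\,w_\star(\bl')$) does not close this gap, because the discrepancy $D$ on $(\bl',1)$ is not controlled by $D$ on a shorter index: after one step you are left with the regularization gap on $y\,w_\star(\bl')$, a new quantity of the same kind rather than an instance of the inductive hypothesis. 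The cited prototype $\zeta^{\sh}(1,1)-\zeta^{\ast}(1,1)=\tfrac12\zeta(2)$ is also the wrong model: that gap is governed by the Ihara--Kaneko--Zagier map $\rho$, whose higher coefficients genuinely involve $\zeta(3),\zeta(4),\dots$, so ``only $\zeta(2)$ survives'' is false for a single regularization-comparison map and must come from a cancellation between \emph{two} of them.

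The paper supplies exactly that cancellation, with no induction. By \cite[Theorem 1]{IKZ06} one has $Z^{\sh}(S(z_{\bk});T)=\rho\bigl(Z^{\ast}(S(z_{\bk});T)\bigr)$, and by \cite[Corollary 4.7]{KY18} one has $Z^{\sh}(w^{\star}(\bk);T)=\rho^{\star}\bigl(Z^{\ast}(S(z_{\bk});T)\bigr)$, where $\rho(e^{Tx})=A(x)e^{Tx}$, $\rho^{\star}(e^{Tx})=A(-x)^{-1}e^{Tx}$ and $A(x)=\Gamma(1+x)e^{\gamma x}$. Hence the two sides of the theorem are related by the single operator $\rho^{\star}\circ\rho^{-1}$, which acts on $e^{Tx}$ by multiplication by $A(x)^{-1}A(-x)^{-1}=\Gamma(1+x)^{-1}\Gamma(1-x)^{-1}=\frac{\sin\pi x}{\pi x}$. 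The reflection formula is precisely what kills all odd zeta values (the odd-degree terms of $\log A(x)$ and $\log A(-x)$ cancel), leaving a power series in $\pi^{2}$ whose nonconstant coefficients lie in $\zeta(2)\cZ$; substituting $T=0$ finishes the proof. This is the explicit ``correction element'' you were hoping to identify. Without this computation, or an equivalent one, the proposal is an outline of the problem rather than a proof of the theorem.
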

By using this theorem, we see that 3 types of the $t$-adic SMZSVs are equivalent in $(\cZ/\zeta(2)\cZ)[[t]]$.
\begin{thm}\label{main2}
 For an index $\bk$, we have
 \begin{align*}
  \zeta^{\star,\ast}_{\widehat{\cS}}(\bk)
  \equiv\zeta^{\star,\sh}_{\widehat{\cS}} (\bk)
  \equiv\zeta^{\star,KY}_{\widehat{\cS}} (\bk)
 \end{align*}
 in $(\cZ/\zeta(2)\cZ)[[t]]$.
\end{thm}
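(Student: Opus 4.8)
The plan is to establish the two congruences separately, since the pairs $(\sh,KY)$ and $(\ast,\sh)$ require genuinely different arguments.

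\textbf{The congruence $\zeta^{\star,\sh}_{\widehat{\cS}}(\bk)\equiv\zeta^{\star,KY}_{\widehat{\cS}}(\bk)$.} I would first observe that these two series are given by one and the same formula in the definition of the $t$-adic SMZSVs, the only difference being that each building block $\zeta^{\star,\sh}(\,\cdot\,)$ is replaced by $\zeta^{\star,KY}(\,\cdot\,)$. Theorem~\ref{main1} provides, for every index $\bl$, the congruence $\zeta^{\star,\sh}(\bl)\equiv\zeta^{\star,KY}(\bl)\pmod{\zeta(2)\cZ}$. For a fixed $N\ge0$, the coefficient of $t^N$ on either side is a \emph{finite} $\bQ$-linear combination of products $\zeta^{\star,\bullet}(k_1,\dots,k_i)\,\zeta^{\star,\bullet}(k_r+l_r,\dots,k_{i+1}+l_{i+1})$ subject to $l_{i+1}+\cdots+l_r=N$. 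Because $\zeta(2)\cZ$ is an ideal of the $\bQ$-algebra $\cZ$, congruence of the two factors is inherited by their product, hence by the finite combination; comparing coefficients of each $t^N$ yields the claim in $(\cZ/\zeta(2)\cZ)[[t]]$. This step is essentially formal once Theorem~\ref{main1} is in hand.

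\textbf{The congruence $\zeta^{\star,\ast}_{\widehat{\cS}}(\bk)\equiv\zeta^{\star,\sh}_{\widehat{\cS}}(\bk)$.} Here I would invoke the identity \eqref{eq:tSMZSV by tSMZV}, which writes each star value, for $\bullet\in\{\ast,\sh\}$, as a sum over all ways of inserting a comma or a plus between the entries of the non-star $t$-adic SMZV $\zeta^{\bullet}_{\widehat{\cS}}$. Subtracting the $\ast$- and $\sh$-expansions term by term reduces the problem to the non-star congruence $\zeta^{\ast}_{\widehat{\cS}}(\bl)\equiv\zeta^{\sh}_{\widehat{\cS}}(\bl)$ in $(\cZ/\zeta(2)\cZ)[[t]]$ for every index $\bl$; summing over the insertions then recovers the star statement.

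I expect this reduction to be the main obstacle, because the term-by-term strategy of the first step \emph{fails} here: the individual values $\zeta^{\star,\ast}(\bl)$ and $\zeta^{\star,\sh}(\bl)$ are generally not congruent modulo $\zeta(2)\cZ$ (for instance $\zeta^{\star,\ast}(1,1,1)-\zeta^{\star,\sh}(1,1,1)=-\tfrac76\zeta(3)$). The harmonic-to-shuffle comparison of the underlying regularizations is controlled by the Ihara--Kaneko--Zagier map $\rho$, whose generating series $\exp\!\bigl(\sum_{n\ge2}\tfrac{(-1)^n}{n}\zeta(n)u^n\bigr)$ injects the zeta values $\zeta(n)$, $n\ge2$; the even ones are harmless since $\zeta(2k)\in\bQ\zeta(2)^k\subseteq\zeta(2)\cZ$, but the odd ones $\zeta(3),\zeta(5),\dots$ are not. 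The antipode-type structure of $\zeta^{\bullet}_{\widehat{\cS}}$ --- the alternating sum over $i$, the reflection $(k_{i+1},\dots,k_r)\mapsto(k_r,\dots,k_{i+1})$ weighted by $(-1)^{k_{i+1}+\cdots+k_r}$, together with the binomial dualization --- is precisely what forces these odd contributions to cancel, leaving a difference in $\zeta(2)\cZ[[t]]$. Proving this cancellation is the real content: it is the $t$-adic analogue of the classical fact that the symmetric MZV is well defined modulo $\zeta(2)$, and I would either cite the corresponding statement for $t$-adic SMZVs from Jarossay \cite{Jar19} or reprove it by inserting the $\rho$-relation into the defining sum and tracking the reflection-induced cancellation of the odd part.
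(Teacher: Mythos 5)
Your proposal follows essentially the same route as the paper: the $\sh\equiv KY$ congruence is deduced term by term from Theorem~\ref{main1} using that $\zeta(2)\cZ$ is an ideal, and the $\ast\equiv\sh$ congruence is reduced via \eqref{eq:tSMZSV by tSMZV} to the non-star congruence $\zeta^{\ast}_{\widehat{\cS}}(\bl)\equiv\zeta^{\sh}_{\widehat{\cS}}(\bl)$, which the paper likewise treats as an external input (citing Ono--Seki--Yamamoto \cite{OSY19} rather than Jarossay). Your identification of that non-star congruence as the genuine content of the second step is exactly right, and the rest is the same formal bookkeeping the authors perform.
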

In the sequel, thanks to Theorem \ref{main2}, we denote their $\bmod{\;\zeta(2)\cZ[[t]]}$ reduction by $\zeta^{\star}_{\widehat{\cS}}(\bk)$.

%%%%%%%%%%%%%%%%%%%%%%%%%%%%%%%%%%%%%%%%%%%%%%%%%%%
\subsection{Cyclic sum formulas}
Our second result is the cyclic sum formula for $\zeta^{\star, KY}_{\widehat{\cS}}(\bk)$. 
The original formulas for MZVs and MZSVs were, respectively, obtained by Hoffman and Ohno \cite{HO03}, and Ohno and Wakabayashi. For a positive integer $k$ and a non-negative integer $s$, we denote $\underbrace{k, \ldots, k}_{s}$ by $\{k\}^s$.
\begin{thm}[Cyclic sum formula; Ohno-Wakabayashi \cite{OW06}] \label{eq: CSF MZSV}
 Let $k$ be a positive integer.
 For a non-empty index $\bk=(k_1,\dots,k_r)$ with $\wt(\bk)=k$, we have
 %\ne (\underbrace{1,\dots,1}_{r})
 \begin{align*}
 % \sum_{i=1}^r\sum_{j=0}^{k_i-2} \zeta(j+1,k_{i+1},\dots, k_r, k_1,\dots, k_{i-1}, k_i-j) 
  %&=\sum_{i=1}^r\zeta (k_{i+1},\dots, k_r, k_1, \dots, k_{i-1}, k_i+1), \\
  \sum_{i=1}^r\sum_{j=0}^{k_i-2} \zeta^\star(j+1, k_{i+1},\dots, k_r, k_1,\dots, k_{i-1}, k_i-j)
  &=k\zeta(k+1)-\delta_{\bk, (\{1\}^r)}\cdot k\zeta(k+1).
 \end{align*} 
 Here, $\delta_{\bk, (\{1\}^r)}$ is 1 if $\bk=(\{1\}^r)$ and 0 if $\bk \neq (\{1\}^r)$.
\end{thm}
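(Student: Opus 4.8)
The plan is to prove the identity directly from the defining series of $\zeta^\star$, by expanding each summand of the cyclic sum, performing the inner sum over the splitting parameter $j$, and then exhibiting a cancellation around the cycle. For a fixed cut position $i$ and splitting value $j$, I would write the term $\zeta^\star(j+1, k_{i+1},\dots, k_{i-1}, k_i-j)$ as a nested series in which the smallest variable $a$ carries the exponent $j+1$, the largest variable $b$ carries the exponent $k_i-j$, and the remaining exponents $k_{i+1},\dots,k_{i-1}$ are carried by a weakly increasing chain between $a$ and $b$. Thus each block is conceptually split into a ``front'' copy $a$ and a ``back'' copy $b$, and the whole left-hand side becomes a sum over cuts of such split configurations.

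First I would carry out the inner sum over $j$. With $a$, $b$, and the intermediate chain fixed, the elementary identity
\[
\sum_{j=0}^{k_i-2}\frac{1}{a^{j+1}b^{k_i-j}}=\frac{1}{b-a}\left(\frac{1}{a^{k_i-1}b}-\frac{1}{b^{k_i}}\right)\qquad (a<b)
\]
collapses the $j$-summation to a two-term difference, while on the diagonal $a=b$ it reduces to $(k_i-1)\,a^{-(k+1)}$, since the total exponent of every such monomial is $\wt(\bk)+1=k+1$ regardless of $j$. I would treat the diagonal and off-diagonal regimes separately. The diagonal is immediate: for each $i$ and each of the $k_i-1$ admissible $j$ one obtains $\sum_{a\ge1}a^{-(k+1)}=\zeta(k+1)$, so the diagonal contribution totals $\sum_{i=1}^r(k_i-1)\zeta(k+1)=(k-r)\zeta(k+1)$. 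It then suffices to show that the off-diagonal ($a<b$) contribution equals $r\zeta(k+1)$, which yields the grand total $k\zeta(k+1)$.

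The off-diagonal part is the heart of the matter, and here the two-term difference above is the telescoping engine. I would assemble the off-diagonal pieces over all cuts $i$ and globally rearrange them so that the ``merged at the top'' terms ($1/b^{k_i}$) produced by one cut cancel in pairs against the complementary terms arising from the reindexed neighbouring configurations, leaving only boundary contributions---one for each of the $r$ edges of the cycle---each of which should evaluate to $\zeta(k+1)$. The cancellation is genuinely global rather than cut-by-cut: for instance for $\bk=(2,1)$ the single cut at the first block already supplies the full $2\zeta(4)=r\zeta(k+1)$. Making this cyclic telescoping rigorous is the main obstacle, precisely because of the star setting: the chains obey weak inequalities, so the boundary coincidences $a=n_{i+1}$ and $n_{i-1}=b$ must be tracked carefully when pairing terms, and the single series surviving on each edge must be identified with $\zeta(k+1)$ rather than with a more complicated double sum.

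Finally, I would dispose of the degenerate index $\bk=(\{1\}^r)$, which falls outside the above analysis because every inner sum $\sum_{j=0}^{k_i-2}$ is empty when all $k_i=1$; there the left-hand side is $0$, matching $k\zeta(k+1)-k\zeta(k+1)=0$ on the right and accounting exactly for the Kronecker $\delta$ term. An alternative route, should the combinatorics of the weak inequalities prove too delicate, is to pass to the iterated-integral representation and deduce the formula from a rotational symmetry of the integration simplex, or to expand each $\zeta^\star$ into ordinary multiple zeta values and match against the cyclic sum formula for multiple zeta values; but I expect the elementary series telescoping to be the cleanest, with the cyclic cancellation of the off-diagonal terms being the step that requires the most care.
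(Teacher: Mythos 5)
First, a point of reference: the paper does not prove this statement at all.  Theorem \ref{eq: CSF MZSV} is quoted from Ohno--Wakabayashi \cite{OW06} and used as an external input (it enters at the very end of the proof of Theorem \ref{cycShat}), so there is no internal argument to compare yours against; your proposal has to stand on its own.  Judged that way, you have chosen the right strategy --- it is essentially the classical partial-fraction method of Hoffman--Ohno and Ohno--Wakabayashi --- and the parts you actually execute are correct: the geometric summation over $j$ is right, the diagonal $a=b$ collapses the whole weak chain and contributes $(k_i-1)\zeta(k+1)$ per cut, hence $(k-r)\zeta(k+1)$ in total, your spot check $\zeta^{\star}(1,1,2)=3\zeta(4)$ is accurate, and the treatment of $\bk=(\{1\}^r)$ is fine.

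The genuine gap is that the decisive step --- showing the off-diagonal ($a<b$) part contributes exactly $r\zeta(k+1)$ --- is announced but not performed, and you flag it yourself as ``the main obstacle.''  That step \emph{is} the theorem: everything before it is bookkeeping.  Two things are missing.  First, a convergence check: once you split $\frac{1}{b-a}\bigl(a^{-(k_i-1)}b^{-1}-b^{-k_i}\bigr)$ into two separate series, each piece carries a harmonic-type factor from summing $1/(b-a)$, and you must verify absolute convergence of each piece (this is where $\bk\neq(\{1\}^r)$, i.e.\ the presence of some $k_i\ge 2$, is used) before any global rearrangement around the cycle is legitimate.  Second, the cancellation itself needs to be organized as a provable identity rather than a hoped-for pairing: the standard device is to introduce, for each cyclic rotation of $\bk$, an auxiliary convergent sum carrying the factor $1/(b-a)$ (the ``$a^{-(k_i-1)}b^{-1}$'' pieces of your decomposition), and to prove a one-step recurrence expressing the difference of consecutive auxiliary sums as the off-diagonal contribution of one cut minus a single $\zeta(k+1)$; summing the recurrence around the cycle then makes the auxiliary sums cancel and leaves $r\zeta(k+1)$.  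Your own example $\bk=(2,1)$, where a single cut already produces the full $2\zeta(4)$, shows that a naive term-by-term pairing between neighbouring cuts cannot be the mechanism, so the recurrence really has to be written down and verified.  Until that identity is stated and proved, the assertion that ``only boundary contributions survive, one $\zeta(k+1)$ per edge'' is a restatement of the claim, not a proof of it.
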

Kawasaki proved the analogous formula for $\bp$-adic FMZ(S)Vs in \cite{Kaw19}. 
Here we introduce their counterpart for $t$-adic SMZ(S)Vs. 
\begin{thm}\label{cycShat}
 Let $k$ be a positive integer.
 For a non-empty index $\bk=(k_1,\dots,k_r)$ with $\wt(\bk)=k$, we have
 \begin{align*}
 &\sum_{i=1}^r \sum_{j=0}^{k_i-2} \zeta^{\star,KY}_{\widehat{\cS}} (j+1, k_{i+1}, \ldots, k_r, k_1, \ldots, k_{i-1}, k_i-j) \\
 &=\sum_{i=1}^r \sum_{j=0}^\infty \zeta^{\star,KY}_{\widehat{\cS}} (j+1, k_{i+1}, \ldots, k_r, k_1, \ldots, k_i)t^j\\
  & \quad +k\zeta^{\star,KY}_{\widehat{\cS}}(k+1)-\delta_{\bk, (\{1\}^r)}\cdot (1+(-1)^{k+1})k\zeta(k+1). 
 \end{align*}
\end{thm}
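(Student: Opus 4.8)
The plan is to reduce the cyclic sum for the $t$-adic SMZSVs to the classical cyclic sum formula for MZSVs (Theorem \ref{eq: CSF MZSV}) by unfolding the defining expression of $\zeta^{\star,KY}_{\widehat{\cS}}$. Writing $\bk^{(i,j)} := (j+1, k_{i+1}, \ldots, k_r, k_1, \ldots, k_{i-1}, k_i - j)$ for the cyclically rotated index appearing on the left-hand side, each summand $\zeta^{\star,KY}_{\widehat{\cS}}(\bk^{(i,j)})$ is, by definition, a sum over a split position $a$ of a product of a prefix value $\zeta^{\star,KY}$ and a reversed-and-shifted suffix value $\zeta^{\star,KY}$, weighted by signs, binomial coefficients, and powers of $t$. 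The first step is to substitute this expansion into the left-hand side of Theorem \ref{cycShat} and interchange the three summations, over the rotation index $i$, the cut $j$, and the split $a$.

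The heart of the argument, namely the new relationship advertised in the introduction, is to reorganize the resulting triple sum so that, for a fixed configuration of the split and shift data, the remaining summation over $i$ and $j$ becomes an honest cyclic sum of ordinary multiple zeta-star values to which Ohno--Wakabayashi (Theorem \ref{eq: CSF MZSV}) applies. Concretely, I would group terms according to whether the $\widehat{\cS}$-split falls inside the prefix block $k_1, \ldots, k_{i-1}$, inside the suffix block $k_{i+1}, \ldots, k_r$, or straddles the cut at $k_i$; in each case the free rotation/cut parameters sweep out a full cyclic orbit of a fixed underlying index, and the inner sum collapses into the Ohno--Wakabayashi value $w\,\zeta(w+1)$, where $w$ is the weight of the underlying index, plus its degenerate correction. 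The binomial-coefficient and $t$-power bookkeeping must be arranged so that the incomplete terms $j \le k_i - 2$ on the left, together with the contributions of the cyclic sum formula, reassemble exactly into the $t$-series $\sum_{i}\sum_{j \ge 0} \zeta^{\star,KY}_{\widehat{\cS}}(j+1, k_{i+1}, \ldots, k_r, k_1, \ldots, k_i)\, t^j$ on the right.

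After this reorganization, the diagonal contributions from each application of Theorem \ref{eq: CSF MZSV} accumulate into the term $k\,\zeta^{\star,KY}_{\widehat{\cS}}(k+1)$; here one uses that the $\widehat{\cS}$-expansion of the single-argument value $\zeta^{\star,KY}_{\widehat{\cS}}(k+1)=\zeta(k+1)+(-1)^{k+1}\sum_{l\ge0}\binom{k+l}{l}\zeta(k+1+l)\,t^l$ reconstructs precisely the symmetrized single-zeta data produced on the right-hand sides of the inner formulas, including their $t$-corrections from the $a=0$ split. The degenerate case $\bk = (\{1\}^r)$ requires separate attention: both the original index and the reversed index triggered by the $\widehat{\cS}$-construction hit the exceptional term of Ohno--Wakabayashi, and since that construction couples $\bk$ with its reverse through the factor $(-1)^{k_{a+1}+\cdots+k_r}$ and the order reversal in the suffix, the two exceptional contributions combine with relative sign $(-1)^{k+1}$, yielding the prefactor $(1 + (-1)^{k+1})$ in the correction $\delta_{\bk,(\{1\}^r)}\cdot(1+(-1)^{k+1})\,k\zeta(k+1)$.

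The main obstacle I anticipate is the combinatorial reindexing in the middle step: one must verify that summing the unfolded $\widehat{\cS}$-terms over the cyclic cuts really does produce complete cyclic orbits, with the correct multiplicities, signs, binomials, and $t$-exponents, so that Theorem \ref{eq: CSF MZSV} can be invoked term by term, and that the leftover non-diagonal terms repackage into the infinite $t$-series without residue. Controlling the boundary interactions between the split position $a$ and the cut $j$, in particular the terms where $k_i - j = 1$ or where $j+1$ meets the split, is the delicate point, and careful handling of these degenerate boundary terms together with the reversal signs is what makes the identity hold exactly rather than merely up to lower-order corrections.
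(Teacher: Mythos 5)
Your overall target is right --- the paper does ultimately reduce Theorem \ref{cycShat} to the Ohno--Wakabayashi formula --- and your accounting for the exceptional term $\delta_{\bk,(\{1\}^r)}(1+(-1)^{k+1})k\zeta(k+1)$ (the index and its reverse each triggering the degenerate case of Theorem \ref{eq: CSF MZSV}, with relative sign $(-1)^{k+1}$) matches what actually happens. But the central step of your plan does not work as described. When you unfold $\zeta^{\star,KY}_{\widehat{\cS}}(\bk^{(i,j)})$ over the split position $a$, the terms where the split falls strictly inside the rotated index are \emph{products} of a prefix value and a reversed-and-shifted suffix value. Summing these products over the rotation $i$ and the cut $j$ does not produce a cyclic sum of single $\zeta^{\star}$'s, so Theorem \ref{eq: CSF MZSV} cannot be invoked on them ``term by term''; there is no version of the cyclic sum formula for such products. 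The obstacle you flag at the end --- verifying that the mixed terms ``reassemble'' into the $t$-series --- is precisely the missing idea, and it is not a reindexing issue: it requires a genuinely new identity.

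What the paper does instead is prove an exact identity in $\fH^1[[t]]$ \emph{before} any evaluation (Theorem \ref{thm: second main thm}, via Proposition \ref{prop: key prop}): writing the unfolded sum as $A+B+C$ according to whether the $\widehat{\cS}$-split makes the whole index the prefix ($B$), the whole index the reversed suffix ($C$), or a genuine mixture ($A$), the term $B$ gives $\widetilde{w}^{\star}_{\CSF}(\alpha)$ on the nose, the term $C$ is resummed by a Chu--Vandermonde identity into $(-1)^{k+1}\sum_{\bl}\widetilde{u}^{\star}_{\CSF}(\alpha;\bl)t^{|\bl|}$ plus corrections, and the mixed term $A$ is handled by a \emph{telescoping} argument over the cut $a+b=k_r-1$ carried out at the level of 2-posets using property (W2) of the map $W$; this telescoping, not Ohno--Wakabayashi, is what produces the infinite $t$-series $\sum_i\sum_{j\ge0}w^{\star}_{\widehat{\cS}}(j+1,\ldots)t^j$ together with corrections cancelling those from $C$. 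Only after this word-level identity is established does one apply $Z^{\sh}$ and use Ohno--Wakabayashi --- and then only on the genuine cyclic-sum words $w^{\star}_{\CSF}(\bk)$ and $w^{\star}_{\CSF}(\overline{\bk+\bl})$, which lie in $\fH^0$ so that no regularization subtlety arises. Without the telescoping identity for the mixed terms (or some substitute for it), your argument cannot be completed.
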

\begin{rem}
We shall give the cyclic sum formula for $\zeta_{\widehat{\cS}}(\bk)$ in Section 6. See Theorem \ref{CSF for tSMZV}.
\end{rem}
\begin{rem}
 Sato and the first-named author obtained the formula for refined SMZVs, which is another generalization of the SMZVs.
\end{rem}

Let $\fH:=\bQ\langle x, y \rangle$ be the non-commutative polynomial ring over $\bQ$ with two variables $x$ and $y$, and $\fH^1:=\bQ+y \fH \supset \fH^0:=\bQ+y\fH x$ be the $\bQ$-subalgebras of $\fH$. For a positive integer $k$ and an index $\bk$ with $\wt(\bk)=k$, let $w^{\star}(\bk)$ be the element in $\fH^1$ corresponding to Kaneko-Yamamoto's integral representation of $\zeta^{\star, KY}(\bk)$ and let $w^{\star}_{\widehat{\cS}}(\bk)$ be the element in $\fH^1[[t]]$ corresponding to that of $\zeta^{\star, KY}_{\widehat{\cS}}(\bk)$ (for the precise definitions, see Section 3.3). 
Set
\begin{align*}
 w^{\star}_{\CSF}(\bk)
 &:=\sum_{i=1}^r\sum_{j=0}^{k_i-2}w^{\star}(j+1, k_{i+1}, \ldots, k_r, k_1, \ldots, k_{i-1}, k_i-j)-kw^{\star}(k+1) \in \fH^0, \\
 w^{\star}_{\CSF, \widehat{\cS}}(\bk)
 &:=\sum_{i=1}^r\sum_{j=0}^{k_i-2}w^{\star}_{\widehat{\cS}}(j+1, k_{i+1}, \ldots, k_r, k_1, \ldots, k_{i-1}, k_i-j) \\
  &\quad -\sum_{i=1}^r\sum_{j=0}^\infty w^{\star}_{\widehat{\cS}}(j+1, k_{i+1}, \ldots, k_r, k_1, \ldots, k_i)t^j 
  -kw^{\star}_{\widehat{\cS}}(k+1) \in \fH^1[[t]].
\end{align*}
\begin{thm}\label{thm: second main thm}
 Let $\bk=(k_1, \ldots, k_r)$ be an index with $\wt(\bk)=k$. Then we have the following equality in $\fH^1[[t]]$: 
 \begin{align*}
  w^{\star}_{\CSF, \widehat{\cS}}(\bk)
  &=w^\star_{\CSF}(\bk)
   +(-1)^{k+1} \sum_{\bl=(l_1, \ldots, l_r) \in \bZ^r_{\ge0}} \prod_{j=1}^r\binom{k_j+l_j-1}{l_j}
   w^{\star}_\CSF(\overline{\bk+\bl}) t^{l_1+\cdots+l_r}.
 \end{align*}
 Here, we set $\overline{\bk+\bl}:=(k_r+l_r, \ldots, k_1+l_1)$.
\end{thm}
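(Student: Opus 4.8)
The plan is to reduce the statement to the defining expansion of $w^{\star}_{\widehat{\cS}}$ in terms of $w^{\star}$. Since the Kaneko--Yamamoto integration map is a homomorphism for the shuffle product $\sh$ on $\fH$, a product $\zeta^{\star,KY}(\boldsymbol{a})\,\zeta^{\star,KY}(\boldsymbol{b})$ is represented by $w^{\star}(\boldsymbol{a})\sh w^{\star}(\boldsymbol{b})$, so the definition of $\zeta^{\star,KY}_{\widehat{\cS}}$ lifts verbatim to
\begin{align*}
 w^{\star}_{\widehat{\cS}}(a_1,\dots,a_s)
 &=\sum_{m=0}^{s}(-1)^{a_{m+1}+\cdots+a_s}
 \sum_{l_{m+1},\dots,l_s\ge0}\ \prod_{n=m+1}^{s}\binom{a_n+l_n-1}{l_n}\\
 &\qquad\times\,
 w^{\star}(a_1,\dots,a_m)\sh w^{\star}(a_s+l_s,\dots,a_{m+1}+l_{m+1})\,t^{l_{m+1}+\cdots+l_s}
\end{align*}
in $\fH^{1}[[t]]$. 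Writing $\boldsymbol{a}^{(i,j)}:=(j+1,a_{i+1},\dots,a_s,a_1,\dots,a_{i-1},a_i-j)$ and $C(\boldsymbol{a}):=\sum_{i=1}^{s}\sum_{j=0}^{a_i-2}w^{\star}(\boldsymbol{a}^{(i,j)})$, so that $w^{\star}_{\CSF}(\boldsymbol{a})=C(\boldsymbol{a})-\wt(\boldsymbol{a})\,w^{\star}(\wt(\boldsymbol{a})+1)$, and abbreviating the two double sums of $w^{\star}_{\CSF,\widehat{\cS}}(\bk)$ by $\mathcal C:=\sum_{i,j}w^{\star}_{\widehat{\cS}}(\bk^{(i,j)})$ and $\mathcal D:=\sum_{i=1}^{r}\sum_{j\ge0}w^{\star}_{\widehat{\cS}}(j+1,k_{i+1},\dots,k_r,k_1,\dots,k_i)\,t^{j}$, I would substitute this expansion into $w^{\star}_{\CSF,\widehat{\cS}}(\bk)=\mathcal C-\mathcal D-k\,w^{\star}_{\widehat{\cS}}(k+1)$ and expand the two $w^{\star}_{\CSF}$-terms on the right the same way. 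Both sides then become explicit $t$-series of shuffle products $w^{\star}(\mathrm{head})\sh w^{\star}(\overline{\mathrm{tail}+\mathrm{shift}})$, and what must be proved is their equality in $\fH^{1}[[t]]$.

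The guiding mechanism is that the head factor and the reversed-and-shifted tail factor each carry one half of the cyclic sum. Grouping the shuffle products by their head word, the all-head contributions of $\mathcal C$ reassemble $C(\bk)$ exactly and, with the all-head part $-k\,w^{\star}(k+1)$ of $-k\,w^{\star}_{\widehat{\cS}}(k+1)$, produce $w^{\star}_{\CSF}(\bk)$. Dually, the tail factors $w^{\star}(\overline{\mathrm{tail}+\mathrm{shift}})$ organize into the reversed cyclic sums $C(\overline{\bk+\bl})$, carrying the sign $(-1)^{k+1}$ because the tail of a weight-$(k+1)$ cyclic-split index has that sign, while the tail part of $-k\,w^{\star}_{\widehat{\cS}}(k+1)$ supplies the diagonal corrections $-(k+|\bl|)\,w^{\star}(k+|\bl|+1)$ completing each $w^{\star}_{\CSF}(\overline{\bk+\bl})$ (here $\sum_{|\bl|=L}\prod_{n}\binom{k_n+l_n-1}{l_n}=\binom{k+L-1}{L}$ and $(k+L)\binom{k+L-1}{L}=k\binom{k+L}{L}$). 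Throughout, the binomial weights produced by first splitting an entry and then shifting its two pieces collapse, by a Vandermonde convolution, to the single weight $\binom{k_i+l_i-1}{l_i}$ obtained by shifting first and splitting afterwards; packaging each entry's shift through $\sum_{l\ge0}\binom{a+l-1}{l}t^{l}=(1-t)^{-a}$ turns every such convolution into the multiplicativity $(1-t)^{-p}(1-t)^{-q}=(1-t)^{-(p+q)}$ and keeps the bookkeeping uniform.

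The main obstacle, I expect, is the remaining identity, which asserts that the genuinely mixed shuffle products of $\mathcal C$ --- those with both head and tail non-empty --- together with the boundary defects of the tail matching are cancelled exactly by the correction sum $\mathcal D$. These defects arise because the split parameter is constrained by $0\le j\le k_i-2$ whereas the shift sums are infinite, and because a cyclic rotation can wrap across the head/tail junction, so the reversed cyclic sums above hold only up to such terms. The summand $w^{\star}_{\widehat{\cS}}(j+1,k_{i+1},\dots,k_i)$ of $\mathcal D$ --- which prepends a free entry $j+1$ to a rotation while leaving the entry $k_i$ intact --- is engineered to reproduce precisely this family, and establishing the cancellation, tracking the finite split-ranges against the infinite shift-sums letter by letter through Vandermonde and $(1-t)^{-a}$, is the delicate heart of the argument. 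A robust way to carry it out is to compare the coefficient of each power $t^{d}$ separately, reducing the claim to a finite alternating identity of words, or to induct on $\dep(\bk)$ by peeling off the split letter. I stress that the whole identity is purely algebraic in $\fH^{1}[[t]]$ and does not use Theorem \ref{eq: CSF MZSV}, which enters only afterwards, upon evaluating the words, to deduce Theorem \ref{cycShat}.
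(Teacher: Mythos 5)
Your architecture matches the paper's: lift the defining expansion to $\fH^1[[t]]$ as shuffle products $w^{\star}(\mathrm{head})\sh w^{\star}(\overline{\mathrm{tail}+\mathrm{shift}})$ (this is exactly $W$ applied to the disjoint unions defining $X^{\star}_{\widehat{\cS}}$), sort by the position of the cut, match the all-head part with $w^{\star}_{\CSF}(\bk)$, convert the all-tail part into $(-1)^{k+1}\sum_{\bl}\prod_j\binom{k_j+l_j-1}{l_j}\,w^{\star}_{\CSF}(\overline{\bk+\bl})t^{|\bl|}$ by a Chu--Vandermonde exchange of ``split then shift'' for ``shift then split,'' and absorb $-kw^{\star}_{\widehat{\cS}}(k+1)$ via $(k+L)\binom{k+L-1}{L}=k\binom{k+L}{L}$. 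All of these ingredients are present and correct, and they correspond respectively to the paper's term $B$, to Lemma \ref{lem: calc of C}, and to the final display in the proof of Theorem \ref{thm: second main thm}.

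The gap is that you stop exactly at what you yourself call ``the delicate heart'': the cancellation of the mixed terms (head and tail both non-empty) against your $\mathcal{D}$, together with the boundary defects created by the finite split range $0\le j\le k_i-2$ versus the infinite shift sums. ``Compare coefficients of $t^d$'' and ``induct on $\dep(\bk)$'' name strategies, not a mechanism, and the assertion that $\mathcal{D}$ is ``engineered to reproduce precisely this family'' is the statement to be proved, not a proof of it. In the paper this is Lemma \ref{lem: calc of A}, and the mechanism is specific: for fixed cut and fixed shifts, the sum over the splittings $a+b=k_i-1$ carries the sign $(-1)^{b+1}$ coming from the tail factor, and it \emph{telescopes}, because the 2-poset relation (W2) breaks each summand into two pieces shared with the adjacent values of $(a,b)$; the surviving end terms are then recombined, using the branch identity \eqref{eq: key id2} and re-indexings that are legitimate only after summing over a full cyclic equivalence class $\alpha$ (``invariant under the cyclic permutation $\tau$''), into $\sum_{l\ge0}w^{\star}_{\widehat{\cS}}(1+l,k_1,\ldots,k_r)t^l$ minus explicit $F$-corrections, which are exactly the ones cancelling against the leftovers from the all-tail computation. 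None of the telescoping, the (W2) bookkeeping, or the cyclic re-indexing appears in your plan, so the central identity of the theorem remains unestablished.
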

\begin{rem}
 By Theorem \ref{thm: second main thm}, we can find Theorem \ref{cycShat} easily (see Section 5). 
\end{rem}

The contents of this paper is as follows. 
In the next section, we introduce the algebraic setup of MZVs and MZSVs, and give the precise definitions of Muneta's and Kaneko--Yamamoto's regularized MZSVs. 
In Section 4, we prove the equivalence of the definitions of $t$-adic SMZSVs. 
In Section 5, we give the proofs of Theorems \ref{cycShat} and \ref{thm: second main thm}. 
In the final section, we prove the cyclic sum formula for the $t$-adic SMZV $\zeta_{\widehat{\cS}}(\bk)$.

%%%%%%%%%%%%%%%%%%%%%%%%%%%%%%%%%%%%%%%%%%%%%%%%%%%%%%%%%%%%%%%%%%%%%%%%%%%%%%%%%%%%%%%%%%%%%%%%%
\section{Preliminaries for the proofs}
%%%%%%%%%%%%%%%%%%%%%%%%%%%%%%%%%%%%%%%%%%%%%%
\subsection{Algebraic setup of MZVs}
We introduce the algebraic setup of MZVs and MZSVs along with \cite{Hof97}.
Let $z_{k}:=yx^{k-1}$. Note that $\fH^1=\bQ\langle z_k \mid k\ge1 \rangle$.
We define the $\bQ$-linear map $Z \colon \fH^0 \rightarrow \bR$ 
by $Z(1):=1$ and $Z(z_{k_1} \cdots z_{k_r}):=\zeta(k_1, \ldots, k_r)$ for an admissible index $(k_1, \ldots, k_r)$. 

We define the harmonic product $\ast \colon \fH^1 \times \fH^1 \rightarrow \fH^1$ and the shuffle product $\sh \colon \fH \times \fH \rightarrow \fH$ inductively by the following rules:
\begin{itemize}
\item[(i)] $w\ast1=1\ast w=w$ for any $w \in \fH^1$.
\item[(ii)] $w_1z_k \ast w_2z_l =(w_1 \ast w_2z_l)z_k+(w_1z_k \ast w_2)z_l+(w_1 \ast w_2)z_{k+l}$ for any $w_1, w_2 \in \fH^1$ and $k, l \in \bZ_{\ge1}$, 
\item[(i')] $w \sh 1=1 \sh w=w$ for any $w \in \fH$.
\item[(ii')] $w_1u_1 \sh w_2u_2 =(w_1 \sh w_2u_2)u_1+(w_1u_1 \sh w_2)u_2$ for any $w_1, w_2 \in \fH$ and $u_1, u_2 \in \{x, y\}$.
\end{itemize}

It is known that the map $Z$ preserves the harmonic product $\ast$ and the shuffle product $\sh$.  
That is, we have 
\begin{align*}
Z(w \ast w')=Z(w \sh w')=Z(w)Z(w')
\end{align*}
for $w, w' \in \fH^0$ (See \cite{Hof97}). 

\subsection{Regularization of MZVs and MZSVs}

Along with \cite{IKZ06} and \cite{Mun09}, we introduce the harmonic regularized MZV $\zeta^{\ast}(\bk)$ and MZSV $\zeta^{\star, \ast}(\bk)$, also the shuffle regularized MZV $\zeta^{\sh}(\bk)$ and MZSV $\zeta^{\star, \sh}(\bk)$. For $\bullet \in \{\ast, \sh\}$, we denote the $\bQ$-algebra $(\fH^1, \bullet)$ and its $\bQ$-subalgebra $(\fH^0, \bullet)$ by $\fH^1_{\bullet}$ and $\fH^0_{\bullet}$, respectively. 
It is known that $\fH^1_{\bullet} \cong \fH^0_{\bullet}[y]$ as $\bQ$-algebra for $\bullet \in \{\ast, \sh\}$ (See \cite{Hof97} for $\bullet=\ast$ and \cite{Re93} for $\bullet =\sh$). 
Thus, there exists a unique $\bQ$-linear map $Z^{\bullet} \colon \fH^1_{\bullet} \rightarrow \bR[T]$ satisfying that $Z^{\bullet}(y)=T$, $Z^{\bullet}|_{\fH^0_{\bullet}}=Z$ and $Z^{\bullet}$ preserves the product $\bullet$ on $\fH^1$. 
We denote the image of $Z^{\bullet}$ at $w \in \fH^1$ by $Z^{\bullet}(w; T)$. It is known that $Z^{\bullet}(w; T) \in \cZ[T]$. 

For an index $\bk=(k_1, \ldots, k_r)$, we define the harmonic (resp.\ shuffle) regularized MZV $\zeta^{\ast}(\bk)$ (resp.\ $\zeta^{\sh}(\bk)$) by 
\begin{align*}
\zeta^{\ast}(\bk):=Z^{\ast}(z_{\bk}; 0) \quad (\text{resp.\ } \zeta^{\sh}(\bk):=Z^{\sh}(z_{\bk}; 0)).
\end{align*}
Here we set $z_{\bk}:=z_{k_1}\cdots z_{k_r}$ for an index $\bk=(k_1, \ldots, k_r)$. Let $\sigma \colon \fH \rightarrow \fH$ be the $\bQ$-automorphism defined by $\sigma(x):=x$ and $\sigma(y):=x+y$, and $S \colon \fH^1 \rightarrow \fH^1$ the $\bQ$-linear map defined by $S(1):=1$ and $S(yw):=y\sigma(w)$ for $w \in \fH$. For an index $\bk=(k_1, \ldots, k_r)$, Muneta's harmonic (resp.\ shuffle) regularized MZSV $\zeta^{\star, \ast}(\bk)$ (resp. $\zeta^{\star, \sh}(\bk)$) is defined by
\begin{align*}
\zeta^{\star, \ast}(\bk):=Z^{\ast}(S(z_{\bk}); 0)
\quad (\text{resp. } \zeta^{\star, \sh}(\bk):=Z^{\sh}(S(z_{\bk}); 0)).
\end{align*}

\subsection{Kaneko-Yamamoto's shuffle regularization of MZSVs}

We introduce Kaneko-Yamamoto's shuffle regularized MZSV $\zeta^{\star, KY}(\bk)$ along with \cite{KY18}. This was introduced by using the multiple integral on 2-posets introduced by Yamamoto in \cite{Yam17}. So, first we recall the definition of 2-posets and the integrals on them.

A 2-poset is a pair $(X, \delta_X)$ consisting of a finite partially ordered set $X=(X, \le)$ and a map $\delta_X : X \rightarrow \{x, y\} \subset \fH$ which is called a label map of $X$. A 2-poset $(X, \delta_X)$ is admissible if $\delta_X(p)=x$ for all maximal elements $p$ in $X$ and $\delta_X(q)=y$ for all minimal elements $q$ in $X$. 

For an admissible 2-poset $(X, \delta_X)$, let 
\begin{align*}
I(X):=\int_{\Delta_X}\prod_{p \in X}\omega_{\delta_X(p)}(t_p),
\end{align*}
where
\begin{align*}
\Delta_X=\{(t_p)_p \in [0, 1]^X \mid t_p<t_q \text{ if } p<q \,(p,q\in X)\}
\end{align*}
and
\begin{gather*}
\omega_x(t):=\frac{dt}{t}, \quad \omega_y(t):=\frac{dt}{1-t}.
\end{gather*}
Note that the integral $I(X)$ converges if and only if the 2-poset $X$ is admissible.

We use Hasse diagrams to indicate 2-posets, with vertices $\circ$ and $\bullet$ corresponding to $\delta_X(p)=x$ and $y$, respectively.

It is known that MZV and MZSV can be written as the integral on a certain 2-poset.
Indeed, for an index $\bk=(k_1, \ldots, k_r)$, set 
\begin{align}\label{eq: def of X Xstar}
X^\star(\bk):=
\begin{xy}
{(0,-4) \ar @{{*}-o} (4,0)}, 
{(4,0) \ar @{.o} (8,4)}, 
{(8,4) \ar @{-{*}} (12,-4)}, 
{(12,-4) \ar @{.} (14,-2)}, 
{(16,0) \ar @{.} (20,0)}, 
{(22,0) \ar @{.{*}} (24,-4)}, 
{(24,-4) \ar @{-{o}} (28,0)}, 
{(28,0) \ar @{.o} (32,4)}, 
{(32,4) \ar @{-{*}} (36,-4)}, 
{(36,-4) \ar @{-{o}} (40,0)}, 
{(40,0) \ar @{.o} (44,4)}, 
{(0,-3) \ar @/^2mm/ @{-}^{k_r} (7,4)}, 
{(24,-3) \ar @/^2mm/ @{-}^{k_2} (31,4)}, 
{(36,-3) \ar @/^2mm/ @{-}^{k_1} (43,4)}, 
\end{xy}.
\end{align}
Then, Yamamoto proved
\begin{gather*}
\zeta^\star(\bk)=I(X^\star(\bk))
\end{gather*}
for an admissible index $\bk$ (\cite[Corollary 1.3]{Yam17}).
Note that for the empty 2-poset denoted by $\varnothing$, we set $I(\varnothing):=1$. 

We recall the algebraic setup of 2-posets. Let $\fP$ be the $\bQ$-algebra generated by the isomorphism classes of 2-posets whose multiplication is given by the disjoint union of 2-posets and $\fP^0$ be the $\bQ$-subalgebra of $\fP$ generated by the isomorphism classes of admissible 2-posets. Then the integral $I(X)$ is regarded as the $\bQ$-algebra map $I \colon \fP^0 \rightarrow \bR$. 
Moreover, for a 2-poset $(X, \delta_X)$, set
\begin{align*}
W(X):=\sum_{\substack{f \colon X \rightarrow \{1, 2, \ldots, \#X\} \\ \textrm{order preserving bijections}}}
u_1 \cdots u_{\#X}, 
\end{align*}
where $u_i:=\delta_X(f^{-1}(i))$. Then, $W$ is the unique $\bQ$-algebra homomorphism $W \colon \fP \rightarrow \fH_{\sh}$ satisfying the following properties:
\begin{itemize}
\item[(W1)] If the 2-poset $X=\{p_1<\cdots<p_k\}$ is totally ordered, the identity $W(X)=\delta_X(p_1) \cdots \delta_X(p_k)$ holds.
\item[(W2)] If $a$ and $b$ are non-comparable elements of a 2-poset $X$, the identity $W(X)=W(X^a_b)+W(X^b_a)$ holds. Here, $X^a_b$ (resp. $X^b_a$) denotes the 2-poset which is obtained from $X$ by adjoining the relation $a>b$ (resp. $a<b$) (see \cite[Definition 2.2]{Yam17}).
\end{itemize} 
Then we have $W(\fP^0)=\fH^0$ and $I=Z\circ W \colon \fP^0 \rightarrow \bR$. Moreover, we have $W(X \sqcup Y)=W(X)\sh W(Y)$ for 2-posets $X$ and $Y$. Here, $X\sqcup Y$ denotes the disjoint union of the 2-posets $X$ and $Y$. 

For an index $\bk$, set $w^{\star}(\bk):=W(X^{\star}(\bk))$. We define Kaneko-Yamamoto's shuffle regularized MZSVs $\zeta^{\star, KY}(\bk)$ by
\begin{align*}
\zeta^{\star,KY}(\bk):=Z^{\sh}(w^{\star}(\bk); 0).
\end{align*}
Moreover, for an index $\bk=(k_1, \ldots, k_r)$, set
\begin{align*}
X^{\star}_{\widehat{\cS}}(\bk)
&:=\sum_{i=0}^{r}(-1)^{k_{i+1}+\cdots+k_r}X^{\star}(k_1, \ldots, k_i)\\
&\qquad \sqcup \sum_{l_{i+1}, \ldots, l_r \ge0}\prod_{j=i+1}^r \binom{k_j+l_j-1}{l_j}
X^{\star}(k_r+l_r, \ldots, k_{i+1}+l_{i+1})t^{l_{i+1}+\cdots+l_r} \in \fP[[t]].
\end{align*}
We extend $W$ and $Z^{\sh}$ to the natural $\bQ$-algebra homomorphisms $W \colon \fP[[t]] \rightarrow \fH_{\sh}[[t]]$ and $Z^{\sh} \colon \fH^1_{\sh}[[t]] \rightarrow \cZ[[t]]$, respectively. Then, if we set $w^{\star}_{\widehat{\cS}}(\bk):=W\bigl(X^{\star}_{\widehat{\cS}}(\bk)\bigr)$ for an index $\bk$, we have
\begin{align*}
\zeta^{\star, KY}_{\widehat{\cS}}(\bk)=Z^{\sh}(w^{\star}_{\widehat{\cS}}(\bk); 0).
\end{align*}

%%%%%%%%%%%%%%%%%%%%%%%%%%%%%%%%%%%%%%%%%%%%%%%%%%%%%%%%%%%%%%%%%%%%%%%%%%%%%%%%%%%%%%%%%%%%%%%%%%%%
\section{Proofs of Theorems \ref{main1}\ and \ref{main2}} 
In this section, we prove Theorems \ref{main1}\ and \ref{main2}.

\begin{proof}[Proof of Theorem \ref{main1}]
Let $\rho, \rho^{\star} \colon \bR[T] \rightarrow \bR[T]$ be the $\bR$-linear maps defined by the equalities
\begin{align} \label{eq100}
\rho(e^{Tx})=A(x)e^{Tx}, \quad \rho^{\star}(e^{Tx})=A(-x)^{-1}e^{Tx}
\end{align}
in $\bR[T][[x]]$ on which $\rho$ and $\rho^{\star}$ act coefficientwise, where
\begin{align*}
 A(x):=\Gamma(1+x)e^{\gamma x}=\exp\left(\sum_{n=2}^\infty\frac{(-1)^n}{n}\zeta(n)x^n\right) \in \bR[[x]]
\end{align*}
and $\gamma$ is Euler's constant. Then, it is known that
\begin{align*}
Z^{\sh}(z_{\bk}; T)=\rho(Z^{\ast}(z_{\bk}; T))
\end{align*}
for an index $\bk$ (\cite[Theorem 1]{IKZ06}) and we have
\begin{align}\label{rho Mstar}
 Z^{\sh}(S(z_{\bk}); T)=\rho\bigl(Z^{\ast}(S(z_{\bk}); T)\bigr).
\end{align}
Moreover, it is known that
\begin{align} \label{rho KYstar}
 Z^{\sh}(w^{\star}(\bk); T)=\rho^{\star}\bigl(Z^{\ast}(S(z_{\bk}); T)\bigr)
\end{align}
for an index $\bk$ (\cite[Corollary 4.7]{KY18}). Thus, since $\rho$ is invertible, by \eqref{rho Mstar} and \eqref{rho KYstar}, we have
\begin{align} \label{rho Msh to KYsh}
Z^{\sh}(w^{\star}(\bk); T))=\rho^{\star} \bigl( \rho^{-1}(Z^{\sh}(S(z_{\bk}); T)) \bigr).
\end{align}
On the other hand, since
\begin{align*}
\rho^{\star}\bigl(\rho^{-1}(e^{Tx})\bigr)=A(x)^{-1}A(-x)^{-1}e^{Tx}=\Gamma(1+x)^{-1}\Gamma(1-x)^{-1}e^{Tx}=\frac{\sin \pi x}{\pi x}e^{Tx}
\end{align*}
and
\begin{align*}
\frac{\sin \pi x}{\pi x}=\sum_{n=0}^{\infty}(-1)^n\frac{\pi^{2n}}{(2n+1)!}x^{2n},
\end{align*}
we have
\begin{align} \label{rho e^Tx}
\rho^{\star}\bigl(\rho^{-1}(T^n)\bigr)\equiv T^n \pmod{\zeta(2)\cZ}
\end{align}
for any non-negative integer $n$. Therefore, by \eqref{rho Msh to KYsh} and \eqref{rho e^Tx}, we have
\begin{align*}
Z^{\sh}(w^{\star}(\bk); T)\equiv Z^{\sh}(S(z_{\bk}); T) \bmod{\zeta(2)\cZ[T]}
\end{align*}
for any index $\bk$. This completes the proof of Theorem \ref{main1} by substituting $T=0$.
\end{proof}

\begin{proof}[Proof of Theorem \ref{main2}]
The former congruence follows from \eqref{eq:tSMZSV by tSMZV} and the fact 
$\zeta^{\ast}_{\widehat{\cS}}(\bk)\equiv \zeta^{\sh}_{\widehat{\cS}}(\bk) \pmod{\zeta(2)\cZ[[t]]}$
for an index $\bk$ (\cite{OSY19}).
The latter congruence follows from Theorem \ref{main1}.
\end{proof}

%%%%%%%%%%%%%%%%%%%%%%%%%%%%%%%%%%%%%%%%%%%%%%%%%%%%
\section{Cyclic sum formula for $t$-adic SMZSVs}

In this section, we give the proofs of Theorems \ref{cycShat} and \ref{thm: second main thm} by using the theory of Yamamoto integral. 

%%%%%%%%%%%%%%%%%%%%%%%%%%%%%%%%%%%%%%%%%%%%%%%%%%%

\subsection{Preliminary}
In this subsection, we state a proposition which leads to Theorem \ref{main2}. For an index $\bk$ with $\wt(\bk)=k$, set
\begin{align*}
\widetilde{w}^{\star}_{\CSF, \widehat{\cS}}(\bk)
&:=w^{\star}_{\CSF, \widehat{\cS}}(\bk)+kw^{\star}_{\widehat{\cS}}(k+1),\\
\widetilde{w}^{\star}_{\CSF}(\bk)
&:=w^{\star}_{\CSF}(\bk)+kw^{\star}(k+1).
\end{align*}

\begin{prop}\label{prop: key prop}
Let $\bk=(k_1, \ldots, k_r)$ be an index with $\wt(\bk)=k$. Then we have
\begin{align*}
\widetilde{w}^{\star}_{\CSF, \widehat{\cS}}(\bk)
=\widetilde{w}^\star_{\CSF}(\bk)
+(-1)^{k+1}\sum_{\bl=(l_1, \ldots, l_r)\in \bZ^r_{\ge0}}\prod_{j=1}^r\binom{k_j+l_j-1}{l_j}\widetilde{w}^{\star}_\CSF(\overline{\bk+\bl})t^{l_1+\cdots+l_r}.
\end{align*}
\end{prop}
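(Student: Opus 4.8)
The plan is to prove this as a purely formal identity in $\fH^1[[t]]$, with no recourse to the numerical cyclic sum formula, by expanding every $w^\star_{\widehat{\cS}}$ through its defining symmetric decomposition and reorganizing the resulting sum. Since $W$ is an algebra homomorphism sending $\sqcup$ to $\sh$, for any index $\bm=(m_1,\dots,m_s)$ one has
\[
w^\star_{\widehat{\cS}}(\bm)=\sum_{a=0}^{s}(-1)^{m_{a+1}+\cdots+m_s}\,w^\star(m_1,\dots,m_a)\sh B_a(\bm),
\]
where $B_a(\bm):=\sum_{l_{a+1},\dots,l_s\ge 0}\prod_{b>a}\binom{m_b+l_b-1}{l_b}\,w^\star(m_s+l_s,\dots,m_{a+1}+l_{a+1})\,t^{l_{a+1}+\cdots+l_s}$. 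First I would substitute this into both the genuine cyclic-sum part $\sum_i\sum_{j=0}^{k_i-2}w^\star_{\widehat{\cS}}(\bk^{(i,j)})$ and the auxiliary part $\sum_i\sum_{j\ge0}w^\star_{\widehat{\cS}}(\widehat{\bk}^{(i,j)})t^j$ of $\widetilde{w}^\star_{\CSF,\widehat{\cS}}(\bk)$, writing $\bk^{(i,j)}=(j+1,k_{i+1},\dots,k_r,k_1,\dots,k_{i-1},k_i-j)$ and $\widehat{\bk}^{(i,j)}=(j+1,k_{i+1},\dots,k_r,k_1,\dots,k_i)$ for the two families of cyclic indices. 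This turns the entire left-hand side into an explicit signed, binomially weighted sum of shuffle products $w^\star\sh w^\star$.

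Next I would sort these terms according to the type of the symmetric cut $a$: \emph{forward-pure} ($a=s$, giving the single word $w^\star(\bm)$), \emph{backward-pure} ($a=0$, giving $(-1)^{\wt(\bm)}B_0(\bm)$), and \emph{mixed} ($0<a<s$, giving a genuine shuffle of two nontrivial words). The forward-pure terms of the genuine part collect, by definition of $w^\star_\CSF$, into exactly $\widetilde{w}^\star_{\CSF}(\bk)$, which is the first summand on the right-hand side. The essential remaining point is to show that the backward-pure terms, together with the mixed terms and the whole auxiliary sum, reassemble into the dual summand $(-1)^{k+1}\sum_{\bl}\prod_{j}\binom{k_j+l_j-1}{l_j}\widetilde{w}^\star_{\CSF}(\overline{\bk+\bl})\,t^{l_1+\cdots+l_r}$.

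For the backward-pure terms I would exploit that reversal carries $\bk^{(i,j)}$, which has weight $k+1$ (whence the global sign $(-1)^{k+1}$), to a rotation-with-split of $\overline{\bk+\bl}$: matching the first and last entries of $\overline{\bk^{(i,j)}+\bl}$ against the split entry of $\overline{\bk+\bl}$ expresses the target split position $j'$ and the two increments on the split pieces in terms of $j$ and $\bl$. Carrying out the $j$-summation for a fixed target word then collapses, via a single Chu--Vandermonde convolution
\[
\sum_{j}\binom{k_i+l'_i-j'-1}{j}\binom{j'}{k_i-1-j}=\binom{k_i+l'_i-1}{l'_i},
\]
into precisely the binomial weight $\binom{k_i+l'_i-1}{l'_i}$ occurring in the dual summand, with the index $i$ of the cyclic sum matching the split entry of $\overline{\bk+\bl}$. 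I expect this convolution and its interaction with the reversal to be the computational heart of the argument.

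The main obstacle, and the structural reason the auxiliary sum is present at all, is the mismatch of ranges: the genuine cyclic sum permits only $0\le j\le k_i-2$, whereas the full Chu--Vandermonde convolution needs $j$ summed over its entire support, and the mixed cuts produce honest shuffle products that are absent from the right-hand side. I would therefore show that (i) the mixed terms of the genuine part cancel against the mixed terms of the auxiliary part under an explicit bijection of internal cuts, and (ii) the boundary contributions $j=k_i-1$ that are dropped from the genuine backward-pure convolution are supplied exactly by the pure parts of the auxiliary sum, where the factor $t^j$ together with the bookkeeping $\wt-\deg_t=k+1$ makes the weights and $t$-degrees line up. Verifying that these cancellations are exact --- with the correct signs, the correct binomial coefficients, and uniformly in $\bl$ --- will be the delicate part; once it is established, only $\widetilde{w}^\star_{\CSF}(\bk)$ and the dual summand survive, which is the assertion.
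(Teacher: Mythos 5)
Your overall architecture matches the paper's: expand each $w^{\star}_{\widehat{\cS}}$ via the symmetric decomposition (which $W$ turns into shuffle products), sort by the position of the cut, observe that the all-forward terms assemble into $\widetilde{w}^{\star}_{\CSF}(\bk)$, and extract the dual summand from the all-backward terms by a Chu--Vandermonde convolution on the split entry. The last step is exactly the paper's computation of its quantity $C$ (Lemma 5.3), and your identification of $\binom{k_i+l_i'-1}{l_i'}$ as the output of a Vandermonde convolution over the split $a+b=k_r-1+l_r$ is correct. However, your handling of the mixed terms contains a genuine gap. Claim (i) --- that the mixed cuts of the genuine cyclic sum cancel against the mixed cuts of the auxiliary sum under a bijection of internal cuts --- is false as stated. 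Writing $F(\bk)$ for the all-backward generating series as in the paper, the mixed terms of the genuine part equal the \emph{entire} auxiliary sum $\sum_{l}w^{\star}_{\widehat{\cS}}(1+l,\bk)t^{l}$ minus its all-backward part $\sum_l F(1+l,\bk)t^l$ plus an extra term $F(\bk,1)$; so after subtracting the mixed part of the auxiliary sum there remains the forward-pure part of the auxiliary sum together with $F(\bk,1)$, not zero. Correspondingly, claim (ii) misattributes the source of the boundary correction: the term needed to complete the $b\ge 1$ sum to $b\ge 0$ in the backward-pure convolution is $-F(\bk,1)$, and it is cancelled by the $+F(\bk,1)$ leftover from the \emph{mixed}-term computation, not by the pure parts of the auxiliary sum.

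The missing idea that makes the mixed terms tractable is a telescoping over the split position. In the paper, the sum $\sum_{a+b=k_r-1,\,b\ge 1}(-1)^{b+1}(\cdots)$ of mixed-cut contributions telescopes because of the 2-poset relation (W2): moving one unit of weight from the forward factor to the backward factor changes the poset by a single comparability relation, so only the two extreme terms survive. It is this telescoping (the paper's Lemma 5.4, splitting $A$ into $A_1+A_2$ and then $A_{21}+A_{22}$), combined with cyclic invariance of the sum over the class of $\bk$, that reassembles the mixed terms into the full auxiliary sum plus the explicit corrections $-\sum_l F(1+l,\bk)t^l+F(\bk,1)$ that then cancel against the leftovers from $C$. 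Without this (or an equivalent word-level identity replacing the poset manipulation), your steps (i) and (ii) are precisely the content of the proposition and cannot be taken for granted. You would also want to organize the computation over cyclic equivalence classes of indices, as the paper does, since several of the required identities only hold after summing over all cyclic rotations.
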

For the proof of Proposition \ref{prop: key prop}, we introduce the cyclic equivalence classes of indices. For positive integers $k$ and $r$ with $r \le k$, set
\begin{align*}
I(k, r):=\{(k_1, \ldots, k_r) \in \bZ^r_{\ge1} \mid \wt(\bk)=k\}.
\end{align*}
We say two elements of $I(k, r)$ are cyclically equivalent if they are cyclic permutations of each other. That is, if we denote the cyclic permutation $(1 \cdots r)$ of length $r$ by $\tau$, and $(k_1, \ldots, k_r)$ and $(k'_1, \ldots, k'_r)$ are elements in $I(k, r)$, we denote $(k_1, \ldots, k_r) \equiv (k'_1, \ldots, k'_r)$ if there exists $j \in \{1, \ldots, r\}$ such that $k'_i=k_{\tau^j(i)}$ for all $1\le i \le r$. Let $\Pi(k, r)$ be a set of cyclic equivalence classes of $I(k, r)$. 
For any $\alpha \in \Pi(k, r)$, set
\begin{align*}
 \widetilde{w}^{\star}_{\CSF, \widehat{\cS}}(\alpha)
 &:=\sum_{(k_1, \ldots, k_r) \in \alpha}
  \sum_{j=0}^{k_r-2} 
  w^{\star}_{\widehat{\cS}}(j+1, k_1, \ldots, k_{r-1}, k_r-j) \\
  &\quad -\sum_{(k_1, \ldots, k_r) \in \alpha}
  \sum_{j=0}^{\infty} 
  w^{\star}_{\widehat{\cS}}(j+1, k_1, \ldots, k_r)t^j, \\
  \widetilde{w}^{\star}_{\CSF}(\alpha)
 &:=\sum_{(k_1, \ldots, k_r) \in \alpha}\sum_{j=0}^{k_r-2}w^{\star}(j+1, k_1, \ldots, k_{r-1}, k_r-j)
\end{align*}
and 
\begin{align*}
\widetilde{u}^{\star}_{\CSF}(\alpha; \bl)
:=\sum_{(k_1, \ldots, k_r) \in \alpha}
\prod_{s=1}^r\binom{k_s+l_s-1}{l_s}
\sum_{j=0}^{k_1+l_1-2}w^{\star}(j+1, k_r+l_r, \ldots, k_2+l_2, k_1+l_1-j)
\end{align*}
for an $\bl :=(l_1, \ldots, l_r) \in \bZ^r_{\ge0}$. 
Then, for the proof of Proposition \ref{prop: key prop}, 
it suffices to prove
\begin{align}\label{eq: key identity}
\widetilde{w}^{\star}_{\CSF, \widehat{\cS}}(\alpha)
=\widetilde{w}^{\star}_{\CSF}(\alpha)+(-1)^{k+1}\sum_{\bl=(l_1, \ldots, l_r) \in \bZ^r_{\ge0}}\widetilde{u}^{\star}_{\CSF}(\alpha; \bl)t^{l_1+\cdots+l_r}.
\end{align}

Now, we prove \eqref{eq: key identity}. For an index  $\bk=(k_1, \ldots, k_r)$, set
\begin{align*}
F(\bk)
:=\sum_{l_1, \ldots, l_r \ge0}
\left\{\prod_{j=1}^r(-1)^{k_j}\binom{k_j+l_j-1}{l_j}\right\}
w^{\star}(k_r+l_r, \ldots, k_1+l_1)t^{l_1+\cdots+l_r},
\end{align*}
and
\begin{align*}
A&:=\sum_{(k_1, \ldots, k_r) \in \alpha}\sum_{\substack{a+b=k_r-1 \\ a\ge0, b\ge1}}\sum_{i=0}^{r-1}
w^{\star}(1+a, k_1, \ldots, k_i)F(k_{i+1}, \ldots, k_{r-1}, 1+b),\\
B&:=\sum_{(k_1, \ldots, k_r) \in \alpha}\sum_{\substack{a+b=k_r-1 \\ a\ge0, b\ge1}}
w^{\star}(1+a, k_1, \ldots, k_{r-1}, 1+b),\\
C&:=\sum_{(k_1, \ldots, k_r) \in \alpha}\sum_{\substack{a+b=k_r-1 \\ a\ge0, b\ge1}}
F(1+a, k_1, \ldots, k_{r-1}, 1+b).
\end{align*}
Then we have
\begin{align*}
\sum_{(k_1, \ldots, k_r) \in \alpha}\sum_{\substack{a+b=k_r-1 \\ a\ge0, b\ge1}}
w^{\star}_{\widehat{\cS}}(1+a, k_1, \ldots, k_{r-1}, 1+b)
=A+B+C.
\end{align*}

%%%%%%%%%%%%%%%%%%%%%%%%%%%%%%%%%%%%%%%%%%%%%%%%%%%
\subsection{Proofs of Theorems \ref{cycShat}\ and \ref{thm: second main thm}}
In this subsection, we give the proofs of Theorems \ref{cycShat} and \ref{thm: second main thm}.
It is easy to see that 
\begin{align}\label{eq: calc of B}
B=\widetilde{w}^{\star}_{\CSF}(\alpha).
\end{align}
We calculate $A$ and $C$ by using the following equality
\begin{align}\label{eq: key id1}
W
\left(
\begin{xy}
%%%%%%left-part%%%%%
{(-4,4) \ar @{o.o} (-2,0)},
{(-2,0) \ar @{-{*}} (0,-4)},
{(-4.5,3) \ar @/_1mm/ @{-}_{c} (-3,0)},
%%%%%%roght-part%%%%%
{(4,4) \ar @{o.o} (2,0)},
{(2,0) \ar @{-{*}} (0,-4)},
{(4.5,3) \ar @/^1mm/ @{-}^{d} (3,0)},
\end{xy}
\right)
=\binom{c+d}{c}
W
\left(
\begin{xy}
%%%%%%left-part%%%%%
{(-4,4) \ar @{o.o} (-2,0)},
{(-2,0) \ar @{-{*}} (0,-4)},
{(-4.5,3) \ar @/_1mm/ @{-}_{c+d} (-3,0)},
\end{xy}
\right).
\end{align}
Here, $c$ and $d$ are non-negative integers. This equality easily follows from the definition of $W$.

\begin{lem}\label{lem: calc of A}
We have
\begin{align*}
A=&\sum_{(k_1, \ldots, k_r) \in \alpha}
\Biggl[
\sum_{l=0}^{\infty}
\Bigl\{
w^{\star}_{\widehat{\cS}}(1+l, k_1, \ldots, k_r)-F(1+l, k_1, \ldots, k_r)
\Bigr\}t^l
+F(k_1, \ldots, k_r, 1)
\Biggr].
\end{align*}
\end{lem}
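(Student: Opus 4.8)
The plan is to push everything into the shuffle algebra $\fH_{\sh}[[t]]$ and to use repeatedly that $W$ is an algebra homomorphism with $W(X\sqcup Y)=W(X)\sh W(Y)$. The first step is to record the decomposition
\[
 w^{\star}_{\widehat{\cS}}(m_1,\dots,m_n)=\sum_{i=0}^{n} w^{\star}(m_1,\dots,m_i)\sh F(m_{i+1},\dots,m_n),
\]
obtained by applying $W$ termwise to the definition of $X^{\star}_{\widehat{\cS}}$ and identifying, through $\prod_{j=i+1}^{n}(-1)^{m_j}=(-1)^{m_{i+1}+\cdots+m_n}$, the inner sign-weighted binomial sum with $F(m_{i+1},\dots,m_n)$. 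Applying this to $(1+l,k_1,\dots,k_r)$ and removing the $i=0$ summand gives
\[
 w^{\star}_{\widehat{\cS}}(1+l,k_1,\dots,k_r)-F(1+l,k_1,\dots,k_r)=\sum_{j=0}^{r} w^{\star}(1+l,k_1,\dots,k_j)\sh F(k_{j+1},\dots,k_r),
\]
so that, after substitution, the right-hand side of the lemma becomes an expression purely in products $w^{\star}\sh F$, with no $w^{\star}_{\widehat{\cS}}$ surviving.

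It then remains to identify this with $A$. I would compare the two sides after expanding each occurrence of $F$ into its defining series, so that both become $\bQ[[t]]$-linear combinations of the same shuffle products of two $w^{\star}$'s. The two descriptions draw on the same list of products and share the same $t$-grading: a split $a+b=k_r-1$ together with a shift $c$ of the tail entry $1+b$ in $A$ corresponds, on the right, to the head $1+l=1+a$ together with a shift $c'=c-a$ of the intact tail entry $k_r$, both lying in the same $t$-degree. Thus the lemma is equivalent to an identity of coefficients in which the distinguished block $k_r$ is either \emph{split} across the head of the prefix and the tail of the suffix (in $A$) or \emph{kept intact} as the last entry of $F$ (on the right). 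This identity is \emph{not} valid product-by-product, so the matching must be carried out globally, after a resummation.

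This is exactly where \eqref{eq: key id1} operates. Realizing the relevant shuffle products as $W$ of explicit $2$-posets, the binomials relating the split description to the intact one are produced by \eqref{eq: key id1}, which merges two $x$-chains sharing a common vertex into a single chain weighted by a binomial coefficient. Summing the resulting weight against the binomial already carried by $F$, over the admissible range $a+b=k_r-1$, $b\ge1$, is a Vandermonde-type convolution; combined with the cyclic summation over the representatives of $\alpha$—which is what legitimizes singling out the block $k_r$ and is essential for closure—it converts the finite split-sum into the full generating series $\sum_{l\ge0}(\cdots)t^{l}$. The terms $w^{\star}(1+l,k_1,\dots,k_r)$ coming from $j=r$ and the extra summand $F(k_1,\dots,k_r,1)$, whose appended entry $1=z_1=y$ records the inserted vertex of the cut, are auxiliary bookkeeping terms that partially cancel against one another: already in the base case one finds $w^{\star}(1,k)+F(k,1)\equiv 0$ in lowest $t$-degree, with the genuine content of $A$ carried by the $j<r$ part.

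I expect the main obstacle to be precisely this bookkeeping: producing the correct $2$-poset in which \eqref{eq: key id1} applies, matching the binomial it yields against the one built into $F$ in each fixed $t$-degree, tracking the sign $(-1)^{k_r}$ through the reversal inherent in $F$, and—above all—isolating the boundary contribution $F(k_1,\dots,k_r,1)$ and the $j=r$ term so that nothing is double-counted. Before attempting the general argument I would verify the identity by hand for $r=1$ (say $\bk=(2)$), where every shuffle and every value of $W$ can be read off directly from the defining relations; this already exhibits the cancellation among the auxiliary terms and pins down the exact shape of the claimed formula.
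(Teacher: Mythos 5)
Your setup is sound: the decomposition $w^{\star}_{\widehat{\cS}}(m_1,\dots,m_n)=\sum_{i}w^{\star}(m_1,\dots,m_i)\sh F(m_{i+1},\dots,m_n)$ is exactly what underlies the paper's splitting $\sum w^{\star}_{\widehat{\cS}}(1+a,k_1,\dots,k_{r-1},1+b)=A+B+C$, and you correctly locate \eqref{eq: key id1} and the cyclic summation over $\alpha$ as essential. But the central mechanism by which the finite sum over splits $a+b=k_r-1$, $b\ge1$ collapses is misidentified, and this is where the proof actually lives. You call it a ``Vandermonde-type convolution''; that cannot work here, because $F(k_{i+1},\dots,k_{r-1},1+b)$ carries the sign $(-1)^{1+b}$, which alternates as $b$ runs over the split — an alternating-sign binomial sum is not a Vandermonde convolution. (Chu--Vandermonde does appear in this paper, but in the computation of $C$, Lemma \ref{lem: calc of C}, where the signs do not alternate over the split.) What the paper does instead is: use \eqref{eq: key id1} to replace $\binom{b+l}{l}$ times a single $x$-chain of length $b+l$ by a $2$-poset in which a $b$-chain and an $l$-chain branch from the common minimal vertex; then apply property (W2) to the top of the $b$-chain, which writes each term as a sum of two posets; the alternating sign $(-1)^{1+b}$ then makes $\sum_{a+b=k_r-1}$ \emph{telescope}, leaving only the two extreme posets $A_1$ and $A_2$. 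A further application of (W2) and of the companion identity \eqref{eq: key id2} (which you never invoke, but which is indispensable for reassembling the intact block $k_r+l_r$ together with its binomial weight $\binom{k_r+l_r-1}{l_r}$ in the piece $A_{22}$) produces the $-A_1$ cancellation and the boundary terms. Without the telescoping and \eqref{eq: key id2}, your global matching has no engine.

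A smaller but telling point: your base-case check is wrong. For $\bk=(k)$ one has $F(k,1)\big|_{t^0}=(-1)^{k+1}w^{\star}(1,k)$, so $w^{\star}(1,k)+F(k,1)\big|_{t^0}=(1+(-1)^{k+1})w^{\star}(1,k)$, which vanishes only for $k$ even. The genuine cancellations in the proof are of a different shape (the term $\sum_{l}w^{\star}(1+l)t^l\cdot F(k_1,\dots,k_r)$ produced in $A_{21}$ cancels against its negative produced in $A_{22}$), so I would not trust the bookkeeping as you have laid it out until the telescoping step is made explicit.
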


\begin{proof}
First, from the definition of $F(\bk)$ and \eqref{eq: key id1}, we have
\begin{align*}
A
&=\sum_{(k_1, \ldots, k_r) \in \alpha}\sum_{\substack{a+b=k_r-1 \\ a\ge0, b\ge1}}\sum_{i=0}^{r-1}\sum_{l_{i+1}, \ldots, l_{r-1}, l \ge0}
(-1)^{1+b}\binom{b+l}{l}t^{l}\prod_{j=i+1}^{r-1}\binom{k_j+l_j-1}{l_j}(-1)^{k_j}t^{l_j}\\
&\quad\times 
W\left(
\begin{xy}
%left-part
{(0,-4) \ar @{{*}-o} (4,0)}, 
{(4,0) \ar @{.o} (8,4)}, 
{(8,4) \ar @{-{*}} (12,-4)}, 
{(12,-4) \ar @{.} (14,-2)}, 
{(16,0) \ar @{.} (20,0)}, 
{(22,0) \ar @{.{*}} (24,-4)}, 
{(24,-4) \ar @{-{o}} (28,0)}, 
{(28,0) \ar @{.o} (32,4)}, 
{(32,4) \ar @{-{*}} (36,-4)}, 
{(36,-4) \ar @{-{o}} (40,0)}, 
{(40,0) \ar @{.o} (44,4)},
{(0,-3) \ar @/^2mm/ @{-}^{k_i} (7,4)}, 
{(24,-3) \ar @/^2mm/ @{-}^{k_1} (31,4)}, 
{(40,1) \ar @/^1mm/ @{-}^{a} (43,4)}, 
%right-part
{(52,4) \ar @{o.o} (56,0)},
{(56,0) \ar @{-{*}} (60,-4)},
{(60,-4) \ar @{-o} (64,4)},
{(64,4) \ar @{.o} (68,0)},
{(68,0) \ar @{-{*}} (72,-4)},
{(72,-4) \ar @{.} (74,0)},
{(76,0) \ar @{.} (80,0)},
{(82,-2) \ar @{.} (84,-4)},
{(84,-4) \ar @{{*}-{o}} (88,4)},
{(88,4) \ar @{.{o}} (92,0)},
{(92,0) \ar @{-{*}} (96,-4)},
{(53,4) \ar @/^1mm/ @{-}^{b+l} (56,1)},
{(65,4) \ar @/^2mm/ @{-}^{k_{r-1}+l_{r-1}} (72,-3)}, 
{(89,4) \ar @/^2mm/ @{-}^{k_{i+1}+l_{i+1}} (96,-3)}, 
\end{xy}
\right)\\
&=\sum_{(k_1, \ldots, k_r) \in \alpha}\sum_{\substack{a+b=k_r-1 \\ a\ge0, b\ge1}}\sum_{i=0}^{r-1}\sum_{l_{i+1}, \ldots, l_{r-1}, l \ge0}
(-1)^{1+b}t^{l}\prod_{j=i+1}^{r-1}\binom{k_j+l_j-1}{l_j}(-1)^{k_j}t^{l_j}\\
&\quad \times
W\left(
\begin{xy}
%left-part
{(0,-4) \ar @{{*}-o} (4,0)}, 
{(4,0) \ar @{.o} (8,4)}, 
{(8,4) \ar @{-{*}} (12,-4)}, 
{(12,-4) \ar @{.} (14,-2)}, 
{(16,0) \ar @{.} (20,0)}, 
{(22,0) \ar @{.{*}} (24,-4)}, 
{(24,-4) \ar @{-{o}} (28,0)}, 
{(28,0) \ar @{.o} (32,4)}, 
{(32,4) \ar @{-{*}} (36,-4)}, 
{(36,-4) \ar @{-{o}} (40,0)}, 
{(40,0) \ar @{.o} (44,4)},
{(0,-3) \ar @/^2mm/ @{-}^{k_i} (7,4)}, 
{(24,-3) \ar @/^2mm/ @{-}^{k_1} (31,4)}, 
{(40,1) \ar @/^1mm/ @{-}^{a} (43,4)}, 
%right-part
{(52,4) \ar @{o.o} (56,0)},
{(56,0) \ar @{-{*}} (60,-4)},
{(60,-4) \ar @{-o} (64,4)},
{(64,4) \ar @{.o} (68,0)},
{(68,0) \ar @{-{*}} (72,-4)},
{(72,-4) \ar @{.} (74,0)},
{(76,0) \ar @{.} (80,0)},
{(82,-2) \ar @{.} (84,-4)},
{(84,-4) \ar @{{*}-{o}} (88,4)},
{(88,4) \ar @{.{o}} (92,0)},
{(92,0) \ar @{-{*}} (96,-4)},
{(52,3) \ar @/_1mm/ @{-}_{b} (55,0)},
{(65,4) \ar @/^2mm/ @{-}^{k_{r-1}+l_{r-1}} (72,-3)}, 
{(89,4) \ar @/^2mm/ @{-}^{k_{i+1}+l_{i+1}} (96,-3)},
%l-part
{(60, -4) \ar @{{*}-o} (60, 0)},
{(60, 0) \ar @{o.o} (60, 8)},
{(59.5,0.5) \ar @/^1mm/ @{-}^{l} (59.5, 7.5)},
\end{xy}
\right).
\end{align*}
Since
\begin{align*}
&W\left(
\begin{xy}
%left-part
{(0,-4) \ar @{{*}-o} (4,0)}, 
{(4,0) \ar @{.o} (8,4)}, 
{(8,4) \ar @{-{*}} (12,-4)}, 
{(12,-4) \ar @{.} (14,-2)}, 
{(16,0) \ar @{.} (20,0)}, 
{(22,0) \ar @{.{*}} (24,-4)}, 
{(24,-4) \ar @{-{o}} (28,0)}, 
{(28,0) \ar @{.o} (32,4)}, 
{(32,4) \ar @{-{*}} (36,-4)}, 
{(36,-4) \ar @{-{o}} (40,0)}, 
{(40,0) \ar @{.o} (44,4)},
{(0,-3) \ar @/^2mm/ @{-}^{k_i} (7,4)}, 
{(24,-3) \ar @/^2mm/ @{-}^{k_1} (31,4)}, 
{(40,1) \ar @/^1mm/ @{-}^{a} (43,4)}, 
%right-part
{(52,4) \ar @{o.o} (56,0)},
{(56,0) \ar @{-{*}} (60,-4)},
{(60,-4) \ar @{-o} (64,4)},
{(64,4) \ar @{.o} (68,0)},
{(68,0) \ar @{-{*}} (72,-4)},
{(72,-4) \ar @{.} (74,0)},
{(76,0) \ar @{.} (80,0)},
{(82,-2) \ar @{.} (84,-4)},
{(84,-4) \ar @{{*}-{o}} (88,4)},
{(88,4) \ar @{.{o}} (92,0)},
{(92,0) \ar @{-{*}} (96,-4)},
{(52,3) \ar @/_1mm/ @{-}_{b} (55,0)},
{(65,4) \ar @/^2mm/ @{-}^{k_{r-1}+l_{r-1}} (72,-3)}, 
{(89,4) \ar @/^2mm/ @{-}^{k_{i+1}+l_{i+1}} (96,-3)},
%l-part
{(60, -4) \ar @{{*}-o} (60, 0)},
{(60, 0) \ar @{o.o} (60, 8)},
{(59.5,0.5) \ar @/^1mm/ @{-}^{l} (59.5, 7.5)},
\end{xy}
\right)\\
&=W\left(
\begin{xy}
%left-part
{(0,-4) \ar @{{*}-o} (4,0)}, 
{(4,0) \ar @{.o} (8,4)}, 
{(8,4) \ar @{-{*}} (12,-4)}, 
{(12,-4) \ar @{.} (14,-2)}, 
{(16,0) \ar @{.} (20,0)}, 
{(22,0) \ar @{.{*}} (24,-4)}, 
{(24,-4) \ar @{-{o}} (28,0)}, 
{(28,0) \ar @{.o} (32,4)}, 
{(32,4) \ar @{-{*}} (36,-4)}, 
{(36,-4) \ar @{-{o}} (40,0)}, 
{(40,0) \ar @{.o} (44,4)},
{(44,4) \ar @{-o} (48,8)},
{(0,-3) \ar @/^2mm/ @{-}^{k_i} (7,4)}, 
{(24,-3) \ar @/^2mm/ @{-}^{k_1} (31,4)}, 
{(40,1) \ar @/^1mm/ @{-}^{a} (43,4)}, 
%right-part
{(48,8) \ar @{.o} (56,0)},
{(56,0) \ar @{-{*}} (60,-4)},
{(60,-4) \ar @{-o} (64,4)},
{(64,4) \ar @{.o} (68,0)},
{(68,0) \ar @{-{*}} (72,-4)},
{(72,-4) \ar @{.} (74,0)},
{(76,0) \ar @{.} (80,0)},
{(82,-2) \ar @{.} (84,-4)},
{(84,-4) \ar @{{*}-{o}} (88,4)},
{(88,4) \ar @{.{o}} (92,0)},
{(92,0) \ar @{-{*}} (96,-4)},
{(48,7) \ar @/_1mm/ @{-}_{b} (55,0)},
{(65,4) \ar @/^2mm/ @{-}^{k_{r-1}+l_{r-1}} (72,-3)}, 
{(89,4) \ar @/^2mm/ @{-}^{k_{i+1}+l_{i+1}} (96,-3)},
%l-part
{(60, -4) \ar @{{*}-o} (60, 0)},
{(60, 0) \ar @{o.o} (60, 8)},
{(59.5,0.5) \ar @/^1mm/ @{-}^{l} (59.5, 7.5)},
\end{xy}
\right)\\
&\quad+W\left(
\begin{xy}
%left-part
{(0,-4) \ar @{{*}-o} (4,0)}, 
{(4,0) \ar @{.o} (8,4)}, 
{(8,4) \ar @{-{*}} (12,-4)}, 
{(12,-4) \ar @{.} (14,-2)}, 
{(16,0) \ar @{.} (20,0)}, 
{(22,0) \ar @{.{*}} (24,-4)}, 
{(24,-4) \ar @{-{o}} (28,0)}, 
{(28,0) \ar @{.o} (32,4)}, 
{(32,4) \ar @{-{*}} (36,-4)}, 
{(36,-4) \ar @{-{o}} (40,0)}, 
{(40,0) \ar @{.o} (48,8)},
{(0,-3) \ar @/^2mm/ @{-}^{k_i} (7,4)}, 
{(24,-3) \ar @/^2mm/ @{-}^{k_1} (31,4)}, 
{(40,1) \ar @/^1mm/ @{-}^{a} (47,8)}, 
%right-part
{(48,8) \ar @{-} (52,4)},
{(52,4) \ar @{o.o} (56,0)},
{(56,0) \ar @{-{*}} (60,-4)},
{(60,-4) \ar @{-o} (64,4)},
{(64,4) \ar @{.o} (68,0)},
{(68,0) \ar @{-{*}} (72,-4)},
{(72,-4) \ar @{.} (74,0)},
{(76,0) \ar @{.} (80,0)},
{(82,-2) \ar @{.} (84,-4)},
{(84,-4) \ar @{{*}-{o}} (88,4)},
{(88,4) \ar @{.{o}} (92,0)},
{(92,0) \ar @{-{*}} (96,-4)},
{(52,3) \ar @/_1mm/ @{-}_{b} (55,0)},
{(65,4) \ar @/^2mm/ @{-}^{k_{r-1}+l_{r-1}} (72,-3)}, 
{(89,4) \ar @/^2mm/ @{-}^{k_{i+1}+l_{i+1}} (96,-3)},
%l-part
{(60, -4) \ar @{{*}-o} (60, 0)},
{(60, 0) \ar @{o.o} (60, 8)},
{(59.5,0.5) \ar @/^1mm/ @{-}^{l} (59.5, 7.5)},
\end{xy}
\right),
\end{align*}
we see that the sum $\sum_{\substack{a+b=k_r-1 \\ a\ge0, b \ge1}}(-1)^{b+1}$ is a telescoping sum. Thus we have

\begin{align}\label{eq: calc of A}
A&=\sum_{(k_1, \ldots, k_r) \in \alpha}\sum_{i=0}^{r-1}\sum_{l_{i+1}, \ldots, l_{r-1}, l \ge0}
(-1)^{k_r}t^{l}\prod_{j=i+1}^{r-1}\binom{k_j+l_j-1}{l_j}(-1)^{k_j}t^{l_j}\\
&\quad \times
%%%%%%%%%%%%%%%%%%%A1%%%%%%%%%%%%%%%%%
W\left(
\begin{xy}
%
%%%%%left-part%%%%%
%
{(0,-4) \ar @{{*}-o} (4,0)}, 
{(4,0) \ar @{.o} (8,4)}, 
{(8,4) \ar @{-{*}} (12,-4)}, 
{(12,-4) \ar @{.} (14,-2)}, 
{(16,0) \ar @{.} (20,0)}, 
{(22,0) \ar @{.{*}} (24,-4)}, 
{(24,-4) \ar @{-{o}} (28,0)}, 
{(28,0) \ar @{.o} (32,4)}, 
{(0,-3) \ar @/^2mm/ @{-}^{k_i} (7,4)}, 
{(24,-3) \ar @/^2mm/ @{-}^{k_1} (31,4)}, 
%
%%%%% connecting part %%% 
%
{(32, 4) \ar @{-{*}} (39, 2)},
{(39,2) \ar @{-o} (46,0)},
{(46,0) \ar @{.o} (53,-2)},
{(53,-2) \ar @{-} (60,-4)},
{(46.5,-1) \ar @/_1mm/ @{-}_{k_r} (59,-4.5)},
%
%%%%% right-part %%%%
%
{(60,-4) \ar @{-o} (64,4)},
{(64,4) \ar @{.o} (68,0)},
{(68,0) \ar @{-{*}} (72,-4)},
{(72,-4) \ar @{.} (74,0)},
{(76,0) \ar @{.} (80,0)},
{(82,-2) \ar @{.} (84,-4)},
{(84,-4) \ar @{{*}-{o}} (88,4)},
{(88,4) \ar @{.{o}} (92,0)},
{(92,0) \ar @{-{*}} (96,-4)},
{(65,4) \ar @/^2mm/ @{-}^{k_{r-1}+l_{r-1}} (72,-3)}, 
{(89,4) \ar @/^2mm/ @{-}^{k_{i+1}+l_{i+1}} (96,-3)},
%
%%%%%% l-part %%%%%
%
{(60, -4) \ar @{{*}-o} (60, 0)},
{(60, 0) \ar @{o.o} (60, 8)},
{(59.5,0.5) \ar @/^1mm/ @{-}^{l} (59.5, 7.5)},
\end{xy}
\right) \nonumber \\
&\quad+\sum_{(k_1, \ldots, k_r) \in \alpha}\sum_{i=0}^{r-1}\sum_{l_{i+1}, \ldots, l_{r-1}, l \ge0}
(-1)^{2}t^{l}\prod_{j=i+1}^{r-1}\binom{k_j+l_j-1}{l_j}(-1)^{k_j}t^{l_j} \nonumber \\
&\quad \times
W\left(
\begin{xy}
%
%%%%% left-part %%%%%
%
{(0,-4) \ar @{{*}-o} (4,0)}, 
{(4,0) \ar @{.o} (8,4)}, 
{(8,4) \ar @{-{*}} (12,-4)}, 
{(12,-4) \ar @{.} (14,-2)}, 
{(16,0) \ar @{.} (20,0)}, 
{(22,0) \ar @{.{*}} (24,-4)}, 
{(24,-4) \ar @{-{o}} (28,0)}, 
{(28,0) \ar @{.o} (32,4)}, 
{(32,4) \ar @{-{*}} (36,-4)}, 
{(36,-4) \ar @{-{o}} (40,0)}, 
{(40,0) \ar @{.o} (44,4)},
{(0,-3) \ar @/^2mm/ @{-}^{k_i} (7,4)}, 
{(24,-3) \ar @/^2mm/ @{-}^{k_1} (31,4)}, 
{(36,-3) \ar @/^2mm/ @{-}^{k_r} (43,4)}, 
%
%%%%% connecting part%%%%%
%
{(44,4) \ar @{-} (60,-4)},
%right-part
{(60,-4) \ar @{-o} (64,4)},
{(64,4) \ar @{.o} (68,0)},
{(68,0) \ar @{-{*}} (72,-4)},
{(72,-4) \ar @{.} (74,0)},
{(76,0) \ar @{.} (80,0)},
{(82,-2) \ar @{.} (84,-4)},
{(84,-4) \ar @{{*}-{o}} (88,4)},
{(88,4) \ar @{.{o}} (92,0)},
{(92,0) \ar @{-{*}} (96,-4)},
{(65,4) \ar @/^2mm/ @{-}^{k_{r-1}+l_{r-1}} (72,-3)}, 
{(89,4) \ar @/^2mm/ @{-}^{k_{i+1}+l_{i+1}} (96,-3)},
%l-part
{(60, -4) \ar @{{*}-o} (60, 0)},
{(60, 0) \ar @{o.o} (60, 8)},
{(59.5,0.5) \ar @/^1mm/ @{-}^{l} (59.5, 7.5)},
\end{xy}
\right) \nonumber \\
&=:A_1+A_2. \nonumber 
\end{align}

Next, we divide $A_2$ into $A_{21}$ and $A_{22}$.
From the property (W2) of $W$, we have
\begin{align}\label{eq: calc of A_2}
A_2=&\sum_{(k_1, \ldots, k_r) \in \alpha}\sum_{i=0}^{r-1}\sum_{l_{i+1}, \ldots, l_{r-1}, l\ge0}t^l\prod_{j=i+1}^{r-1}\binom{k_j+l_j-1}{l_j}(-1)^{k_j}t^{l_j}\\
&\times
W\left(
\begin{xy}
%left-part
{(0,-4) \ar @{{*}-o} (4,0)}, 
{(4,0) \ar @{.o} (8,4)}, 
{(8,4) \ar @{-{*}} (12,-4)}, 
{(12,-4) \ar @{.} (14,-2)}, 
{(16,0) \ar @{.} (20,0)}, 
{(22,0) \ar @{.{*}} (24,-4)}, 
{(24,-4) \ar @{-{o}} (28,0)}, 
{(28,0) \ar @{.o} (32,4)}, 
{(32,4) \ar @{-{*}} (36,-4)}, 
{(36,-4) \ar @{-{o}} (40,0)}, 
{(40,0) \ar @{.o} (44,4)},
{(44,4) \ar @{o-{*}} (48,-4)},
{(48,-4) \ar @{-o} (52,0)},
{(52,0) \ar @{.o} (56,4)},
{(0,-3) \ar @/^2mm/ @{-}^{k_i} (7,4)}, 
{(24,-3) \ar @/^2mm/ @{-}^{k_1} (31,4)}, 
{(36,-3) \ar @/^1mm/ @{-}^{k_r} (43,4)}, 
{(52,1) \ar @/^1mm/ @{-}^{l} (55,4)},
%right-part
{(64,4) \ar @{o.o} (68,0)},
{(68,0) \ar @{-{*}} (72,-4)},
{(72,-4) \ar @{.} (74,0)},
{(76,0) \ar @{.} (80,0)},
{(82,-2) \ar @{.} (84,-4)},
{(84,-4) \ar @{{*}-{o}} (88,4)},
{(88,4) \ar @{.{o}} (92,0)},
{(92,0) \ar @{-{*}} (96,-4)},
{(65,4) \ar @/^2mm/ @{-}^{k_{r-1}+l_{r-1}} (72,-3)}, 
{(89,4) \ar @/^2mm/ @{-}^{k_{i+1}+l_{i+1}} (96,-3)},
\end{xy}
\right) \nonumber \\
&-\sum_{(k_1, \ldots, k_r) \in \alpha}\sum_{i=0}^{r-2}\sum_{l_{i+1}, \ldots, l_{r-1}, l\ge0}t^l\prod_{j=i+1}^{r-1}\binom{k_j+l_j-1}{l_j}(-1)^{k_j}t^{l_j} \nonumber \\
&\times
W\left(
\begin{xy}
%left-part
{(0,-4) \ar @{{*}-o} (4,0)}, 
{(4,0) \ar @{.o} (8,4)}, 
{(8,4) \ar @{-{*}} (12,-4)}, 
{(12,-4) \ar @{.} (14,-2)}, 
{(16,0) \ar @{.} (20,0)}, 
{(22,0) \ar @{.{*}} (24,-4)}, 
{(24,-4) \ar @{-{o}} (28,0)}, 
{(28,0) \ar @{.o} (32,4)}, 
{(32,4) \ar @{-{*}} (36,-4)}, 
{(36,-4) \ar @{-{o}} (40,0)}, 
{(40,0) \ar @{.o} (44,4)},
{(0,-3) \ar @/^2mm/ @{-}^{k_i} (7,4)}, 
{(24,-3) \ar @/^2mm/ @{-}^{k_1} (31,4)}, 
{(36,-3) \ar @/^1mm/ @{-}^{k_r} (43,4)}, 
%right-part
{(44,4) \ar @{o-{*}} (48,2)},
{(48,2) \ar @{-o} (52,0)},
{(52,0) \ar @{.o} (56,-2)},
{(56,-2) \ar @{-{*}} (60,-4)},
{(60,-4) \ar @{-o} (64,4)},
{(64,4) \ar @{.o} (68,0)},
{(68,0) \ar @{-{*}} (72,-4)},
{(72,-4) \ar @{.} (74,0)},
{(76,0) \ar @{.} (80,0)},
{(82,-2) \ar @{.} (84,-4)},
{(84,-4) \ar @{{*}-{o}} (88,4)},
{(88,4) \ar @{.{o}} (92,0)},
{(92,0) \ar @{-{*}} (96,-4)},
{(52,-1) \ar @/_2mm/ @{-}_{k_{r-1}+l_{r-1}} (59,-4.5)}, 
{(65,4) \ar @/^2mm/ @{-}^{k_{r-2}+l_{r-2}} (72,-3)},
{(89,4) \ar @/^2mm/ @{-}^{k_{i+1}+l_{i+1}} (96,-3)},
%l-part
{(48, 2) \ar @{-o} (52, 5)},
{(52, 5) \ar @{o.o} (56, 8)},
{(52,6) \ar @/^1mm/ @{-}^{l} (55,8)},
\end{xy}
\right) \nonumber \\
=:& A_{21}+A_{22}. \nonumber 
\end{align}
Moreover, from the definition of $F(k_1, \ldots, k_r)$, we see that 
\begin{align}\label{eq: calc of A_{21}}
A_{21}
=&\sum_{(k_1, \ldots, k_r) \in \alpha}\sum_{i=0}^{r-1}\sum_{l=0}^{\infty}
w^{\star}(1+l, k_r, k_1, \ldots, k_i)t^l \cdot F(k_{i+1}, \ldots, k_{r-1})\\
=&\sum_{(k_1, \ldots, k_r) \in \alpha}\sum_{l=0}^{\infty}
\bigl\{w^{\star}_{\widehat{\cS}}(1+l, k_r, k_1, \ldots, k_{r-1}) \nonumber \\
&\qquad\qquad\qquad-w^{\star}(1+l)\cdot F(k_r, k_1, \ldots, k_{r-1})-F(1+l, k_r, k_1, \ldots, k_{r-1})\bigr\}t^l \nonumber \\
=&\sum_{(k_1, \ldots, k_r) \in \alpha}\sum_{l=0}^{\infty}
\bigl\{w^{\star}_{\widehat{\cS}}(1+l, k_1, \ldots, k_r) \nonumber \\
&\qquad\qquad\qquad-w^{\star}(1+l) \cdot F(k_1, \ldots, k_r)-F(1+l, k_1, \ldots, k_r)\bigr\}t^l. \nonumber
\end{align}
Note that the last equality follows from that the considered sum is invariant under the cyclic permutation $\tau$.

Now, we calculate $A_{22}$. From the property (W2) of $W$, it is easy to see that
\begin{align}\label{eq: key id2}
\sum_{l', l'' \ge 0}\binom{k+l''-1}{l''}
W
\left(
\begin{xy}
{(-16,5) \ar @{o.o} (-12,3)},
{(-12,3) \ar @{-{*}} (-8,1)},
{(-8,1) \ar @{-o} (-4,-1)},
{(-4,-1) \ar@{.o} (0,-3)},
{(0,-3) \ar @{-{*}} (4,-5)},
{(-16,4) \ar @/_1mm/ @{-}_{l'} (-12.5,2.5)},
{(-4,-2) \ar @/_1mm/ @{-}_{k+l''} (3,-5)},
\end{xy}
\right)
t^{l'+l''}
=\sum_{l \ge 0}
W
\left(
\begin{xy}
%%%%%%left-part%%%%%
{(-6,6) \ar @{{*}-} (-4,2)},
{(-4,2) \ar @{o.o} (-2,-2)},
{(-2,-2) \ar @{-{*}} (0,-6)},
{(-4,1) \ar @/_1mm/ @{-}_{k} (-1,-6)},
%%%%%%right-part%%%%%
{(4,2) \ar @{o.o} (2,-2)},
{(2,-2) \ar @{-{*}} (0,-6)},
{(4.5,1) \ar @/^1mm/ @{-}^{l} (3,-2)},
\end{xy}
\right)
t^l
\end{align}
for a positive integer $k$. By using \eqref{eq: key id2}, we have
\begin{align*}
A_{22}
&=-\sum_{(k_1, \ldots, k_r)\in \alpha}\sum_{i=0}^{r-2}\sum_{l_{i+1}, \ldots, l_{r-2}, l \ge0}
(-1)^{k_{r-1}}t^l\prod_{j=i+1}^{r-2}\binom{k_j+l_j-1}{l_j}(-1)^{k_j}t^{l_j}\\
&\qquad \times 
W\left(
\begin{xy}
%
%%%%% left-part %%%%%
%
{(0,-4) \ar @{{*}-o} (4,0)}, 
{(4,0) \ar @{.o} (8,4)}, 
{(8,4) \ar @{-{*}} (12,-4)}, 
{(12,-4) \ar @{.} (14,-2)}, 
{(16,0) \ar @{.} (20,0)}, 
{(22,0) \ar @{.{*}} (24,-4)}, 
{(24,-4) \ar @{-{o}} (28,0)}, 
{(28,0) \ar @{.o} (32,4)}, 
{(32,4) \ar @{-{*}} (36,-4)}, 
{(36,-4) \ar @{-{o}} (40,0)}, 
{(40,0) \ar @{.o} (44,4)},
{(0,-3) \ar @/^2mm/ @{-}^{k_i} (7,4)}, 
{(24,-3) \ar @/^2mm/ @{-}^{k_1} (31,4)}, 
{(36,-3) \ar @/^1mm/ @{-}^{k_r} (43,4)}, 
%
%%%%% connecting part %%%%%
%
{(44,4) \ar @{o-{*}} (48,2)},
{(48,2) \ar @{-o} (52,0)},
{(52,0) \ar @{.o} (56,-2)},
{(56,-2) \ar @{-{*}} (60,-4)},
{(52,-1) \ar @/_2mm/ @{-}_{k_{r-1}} (59,-4.5)}, 
%
%%%%% right-part %%%%%
%
{(60,-4) \ar @{-o} (64,4)},
{(64,4) \ar @{.o} (68,0)},
{(68,0) \ar @{-{*}} (72,-4)},
{(72,-4) \ar @{.} (74,0)},
{(76,0) \ar @{.} (80,0)},
{(82,-2) \ar @{.} (84,-4)},
{(84,-4) \ar @{{*}-{o}} (88,4)},
{(88,4) \ar @{.{o}} (92,0)},
{(92,0) \ar @{-{*}} (96,-4)},
{(65,4) \ar @/^2mm/ @{-}^{k_{r-2}+l_{r-2}} (72,-3)},
{(89,4) \ar @/^2mm/ @{-}^{k_{i+1}+l_{i+1}} (96,-3)},
%
%%%%% l-part %%%%%
%
{(60,-4) \ar @{-o} (60, 0)},
{(60, 0) \ar @{o.o} (60, 8)},
{(59,0) \ar @/^1mm/ @{-}^{l} (59, 8)},
\end{xy}
\right)\\
&=-\sum_{(k_1, \ldots, k_r)\in \alpha}\sum_{i=0}^{r-2}\sum_{l_{i+2}, \ldots, l_{r-1}, l \ge0}
(-1)^{k_r}t^l\prod_{j=i+2}^{r-1}\binom{k_j+l_j-1}{l_j}(-1)^{k_j}t^{l_j} \nonumber \\
&\qquad \times 
W\left(
\begin{xy}
%
%%%%% left-part %%%%%
%
{(0,-4) \ar @{{*}-o} (4,0)}, 
{(4,0) \ar @{.o} (8,4)}, 
{(8,4) \ar @{-{*}} (12,-4)}, 
{(12,-4) \ar @{.} (14,-2)}, 
{(16,0) \ar @{.} (20,0)}, 
{(22,0) \ar @{.{*}} (24,-4)}, 
{(24,-4) \ar @{-{o}} (28,0)}, 
{(28,0) \ar @{.o} (32,4)}, 
{(32,4) \ar @{-{*}} (36,-4)}, 
{(36,-4) \ar @{-{o}} (40,0)}, 
{(40,0) \ar @{.o} (44,4)},
{(0,-3) \ar @/^2mm/ @{-}^{k_{i+1}} (7,4)}, 
{(24,-3) \ar @/^2mm/ @{-}^{k_2} (31,4)}, 
{(36,-3) \ar @/^1mm/ @{-}^{k_1} (43,4)}, 
%
%%%%% connecting part %%%%%
%
{(44,4) \ar @{o-{*}} (48,2)},
{(48,2) \ar @{-o} (52,0)},
{(52,0) \ar @{.o} (56,-2)},
{(56,-2) \ar @{-{*}} (60,-4)},
{(52,-1) \ar @/_2mm/ @{-}_{k_r} (59,-4.5)}, 
%
%%%%% right-part %%%%%
%
{(60,-4) \ar @{-o} (64,4)},
{(64,4) \ar @{.o} (68,0)},
{(68,0) \ar @{-{*}} (72,-4)},
{(72,-4) \ar @{.} (74,0)},
{(76,0) \ar @{.} (80,0)},
{(82,-2) \ar @{.} (84,-4)},
{(84,-4) \ar @{{*}-{o}} (88,4)},
{(88,4) \ar @{.{o}} (92,0)},
{(92,0) \ar @{-{*}} (96,-4)},
{(65,4) \ar @/^2mm/ @{-}^{k_{r-1}+l_{r-1}} (72,-3)},
{(89,4) \ar @/^2mm/ @{-}^{k_{i+2}+l_{i+2}} (96,-3)},
%
%%%%% l-part %%%%%
%
{(60,-4) \ar @{-o} (60, 0)},
{(60, 0) \ar @{o.o} (60, 8)},
{(59,0) \ar @/^1mm/ @{-}^{l} (59, 8)},
\end{xy}
\right). 
\end{align*}
The second identity above follows from that $A_{22}$ is invariant under the cyclic permutation $\tau$.
By changing the variable $i$ to $i+1$, we have
\begin{align} \label{eq: calc of A_22 2}
A_{22}&=-\sum_{(k_1, \ldots, k_r)\in \alpha}\sum_{i=1}^{r-1}\sum_{l_{i+1}, \ldots, l_{r-1}, l \ge0}
(-1)^{k_r}t^l\prod_{j=i+1}^{r-1}\binom{k_j+l_j-1}{l_j}(-1)^{k_j}t^{l_j}\\
&\qquad \times 
W\left(
\begin{xy}
%
%%%%% left-part %%%%%
%
{(0,-4) \ar @{{*}-o} (4,0)}, 
{(4,0) \ar @{.o} (8,4)}, 
{(8,4) \ar @{-{*}} (12,-4)}, 
{(12,-4) \ar @{.} (14,-2)}, 
{(16,0) \ar @{.} (20,0)}, 
{(22,0) \ar @{.{*}} (24,-4)}, 
{(24,-4) \ar @{-{o}} (28,0)}, 
{(28,0) \ar @{.o} (32,4)}, 
{(32,4) \ar @{-{*}} (36,-4)}, 
{(36,-4) \ar @{-{o}} (40,0)}, 
{(40,0) \ar @{.o} (44,4)},
{(0,-3) \ar @/^2mm/ @{-}^{k_{i}} (7,4)}, 
{(24,-3) \ar @/^2mm/ @{-}^{k_2} (31,4)}, 
{(36,-3) \ar @/^1mm/ @{-}^{k_1} (43,4)}, 
%
%%%%% connecting part %%%%%
%
{(44,4) \ar @{o-{*}} (48,2)},
{(48,2) \ar @{-o} (52,0)},
{(52,0) \ar @{.o} (56,-2)},
{(56,-2) \ar @{-{*}} (60,-4)},
{(52,-1) \ar @/_2mm/ @{-}_{k_r} (59,-4.5)}, 
%
%%%%% right-part %%%%%
%
{(60,-4) \ar @{-o} (64,4)},
{(64,4) \ar @{.o} (68,0)},
{(68,0) \ar @{-{*}} (72,-4)},
{(72,-4) \ar @{.} (74,0)},
{(76,0) \ar @{.} (80,0)},
{(82,-2) \ar @{.} (84,-4)},
{(84,-4) \ar @{{*}-{o}} (88,4)},
{(88,4) \ar @{.{o}} (92,0)},
{(92,0) \ar @{-{*}} (96,-4)},
{(65,4) \ar @/^2mm/ @{-}^{k_{r-1}+l_{r-1}} (72,-3)},
{(89,4) \ar @/^2mm/ @{-}^{k_{i+1}+l_{i+1}} (96,-3)},
%
%%%%% l-part %%%%%
%
{(60,-4) \ar @{-o} (60, 0)},
{(60, 0) \ar @{o.o} (60, 8)},
{(59,0) \ar @/^1mm/ @{-}^{l} (59, 8)},
\end{xy}
\right). \nonumber 
\end{align}
Then, by comparing the definition of $A_1$ and \eqref{eq: calc of A_22 2} and using \eqref{eq: key id2}, we have
\begin{align}
A_{22}&=-A_1+\sum_{(k_1, \ldots, k_r)\in \alpha}\sum_{l_1, \ldots, l_{r-1}, l \ge0}
(-1)^{k_r}t^l\prod_{j=1}^{r-1}\binom{k_j+l_j-1}{l_j}(-1)^{k_j}t^{l_j} \nonumber \\
&\qquad \qquad \qquad \qquad \qquad \qquad \qquad \qquad \qquad \times 
W\left(
\begin{xy}
%
%%%%% left-part %%%%%
{(48,2) \ar @{{*}-o} (52,0)},
{(52,0) \ar @{.o} (56,-2)},
{(56,-2) \ar @{-{*}} (60,-4)},
{(52,-1) \ar @/_2mm/ @{-}_{k_r} (59,-4.5)}, 
%
%%%%% right-part %%%%%
%
{(60,-4) \ar @{-o} (64,4)},
{(64,4) \ar @{.o} (68,0)},
{(68,0) \ar @{-{*}} (72,-4)},
{(72,-4) \ar @{.} (74,0)},
{(76,0) \ar @{.} (80,0)},
{(82,-2) \ar @{.} (84,-4)},
{(84,-4) \ar @{{*}-{o}} (88,4)},
{(88,4) \ar @{.{o}} (92,0)},
{(92,0) \ar @{-{*}} (96,-4)},
{(65,4) \ar @/^2mm/ @{-}^{k_{r-1}+l_{r-1}} (72,-3)},
{(89,4) \ar @/^2mm/ @{-}^{k_1+l_1} (96,-3)},
%
%%%%% l-part %%%%%
%
{(60,-4) \ar @{-o} (60, 0)},
{(60, 0) \ar @{o.o} (60, 8)},
{(59,0) \ar @/^1mm/ @{-}^{l} (59, 8)},
\end{xy}
\right) \nonumber \\
&=-A_1+\sum_{(k_1, \ldots, k_r) \in \alpha}\sum_{l_1, \ldots, l_r, l \ge0}t^l \prod_{j=1}^{r}\binom{k_j+l_j-1}{l_j}(-1)^{k_j}t^{l_j} \nonumber \\
&\qquad \qquad \qquad \qquad \qquad \qquad \qquad \qquad \times
W\left(
\begin{xy}
%
%%%%% left-part %%%%%
%
{(40,6) \ar @{o.} (44, 4)},
{(44, 4) \ar @{o-} (48, 2)},
{(40,5) \ar @/_1mm/ @{-}_{l} (43.5, 3.5)},
%
%%%%% connecting part %%%%%
%
{(48,2) \ar @{{*}-o} (52,0)},
{(52,0) \ar @{.o} (56,-2)},
{(56,-2) \ar @{-{*}} (60,-4)},
{(52,-1) \ar @/_2mm/ @{-}_{k_r+l_r} (59,-4.5)}, 
%
%%%%% right-part %%%%%
%
{(60,-4) \ar @{-o} (64,4)},
{(64,4) \ar @{.o} (68,0)},
{(68,0) \ar @{-{*}} (72,-4)},
{(72,-4) \ar @{.} (74,0)},
{(76,0) \ar @{.} (80,0)},
{(82,-2) \ar @{.} (84,-4)},
{(84,-4) \ar @{{*}-{o}} (88,4)},
{(88,4) \ar @{.{o}} (92,0)},
{(92,0) \ar @{-{*}} (96,-4)},
{(65,4) \ar @/^2mm/ @{-}^{k_{r-1}+l_{r-1}} (72,-3)},
{(89,4) \ar @/^2mm/ @{-}^{k_1+l_1} (96,-3)},
\end{xy}
\right). \nonumber
\end{align}
Finally, from the definition of $X^{\star}(\bk)$, we have
\begin{align}\label{eq: calc of A_{22}}
A_{22}&=-A_1+\sum_{(k_1, \ldots, k_r) \in \alpha} \sum_{l_1, \ldots, l_r, l \ge 0}
t^l\prod_{j=1}^r\binom{k_j+l_j-1}{l_j}(-1)^{k_j}t^{l_j} \\
&\qquad \qquad \times\left\{w^{\star}(1+l)\sh w^{\star}(k_r+l_r, \ldots, k_1+l_1)-w^{\star}(1+l, k_r+l_r, \ldots, k_1+l_1)\right\} \nonumber \\
&=-A_1+\sum_{(k_1, \ldots, k_r) \in \alpha}\sum_{l=0}^{\infty}
w^{\star}(1+l)t^l \cdot F(k_1, \ldots, k_r)+\sum_{(k_1, \ldots, k_r) \in \alpha}F(k_1, \ldots, k_r, 1). \nonumber 
\end{align}
From \eqref{eq: calc of A}, \eqref{eq: calc of A_2}, \eqref{eq: calc of A_{21}}, and \eqref{eq: calc of A_{22}}, we obtain the desired formula.
\end{proof}

Next, we calculate $C$.

\begin{lem}\label{lem: calc of C}
We have
\begin{align*}
C&=(-1)^{k+1}\sum_{\bl=(l_1, \ldots, l_r) \in \bZ^r_{\ge0}}
\widetilde{u}^{\star}_{\CSF}(\alpha; \bl)t^{l_1+\cdots+l_r}\\
&\quad + \sum_{(k_1, \ldots, k_r) \in \alpha}\sum_{l=0}^{\infty}F(1+l, k_1, \ldots, k_r)t^l
-\sum_{(k_1, \ldots, k_r) \in \alpha}F(k_1, \ldots, k_r, 1).
\end{align*}
\end{lem}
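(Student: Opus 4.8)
The plan is to prove the formula for $C$ by substituting the definition of $F$ into
\[
C=\sum_{(k_1, \ldots, k_r) \in \alpha}\sum_{\substack{a+b=k_r-1 \\ a\ge0, b\ge1}}F(1+a, k_1, \ldots, k_{r-1}, 1+b)
\]
and reorganizing the resulting $w^{\star}$-sum by a Vandermonde convolution. Expanding $F$ turns each summand into a sum over $p,q,l_1,\ldots,l_{r-1}\ge0$ of
\[
(-1)^{1+a}\binom{a+p}{p}(-1)^{1+b}\binom{b+q}{q}\prod_{j=1}^{r-1}(-1)^{k_j}\binom{k_j+l_j-1}{l_j}\,w^{\star}(1+b+q, k_{r-1}+l_{r-1}, \ldots, k_1+l_1, 1+a+p)\,t^{p+q+l_1+\cdots+l_{r-1}}.
\]
Since $a+b=k_r-1$, the overall sign is $(-1)^{k_r-1}\prod_{j=1}^{r-1}(-1)^{k_j}=(-1)^{k+1}$, independent of the splitting; this is the source of the global factor $(-1)^{k+1}$ in the statement. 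Note that, unlike the computation of $A$, the sign is constant here, so there is no telescoping and the mechanism is purely combinatorial.

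The key reindexing is to set $l_r:=p+q$ and $j:=b+q$. Then $a+p=k_r+l_r-1-j$ and $b+q=j$ are constant along the fiber, the first and last entries of the $w^{\star}$ become $1+j$ and $k_r+l_r-j$, and for fixed $j$ and $l_r$ the two split binomials combine by Vandermonde's identity:
\[
\sum_q\binom{k_r+l_r-1-j}{l_r-q}\binom{j}{q}=\binom{k_r+l_r-1}{l_r},
\]
which supplies exactly the missing $s=r$ factor of $\prod_{s=1}^r\binom{k_s+l_s-1}{l_s}$. The resulting terms $w^{\star}(1+j, k_{r-1}+l_{r-1}, \ldots, k_1+l_1, k_r+l_r-j)$ match those in $\widetilde{u}^{\star}_{\CSF}(\alpha; \bl)$ after a cyclic relabeling of the index, which is permissible because the whole sum runs over the cyclic class $\alpha$.

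The main obstacle is that two boundary effects prevent a direct match, and both must be identified as explicit corrections. First, the constraint $b\ge1$ in $C$ removes the $q=j$ (that is, $b=0$) end of the Vandermonde range; I would enlarge the range to $b\ge0$ and subtract the $b=0$ contribution, which is $\sum_{(k_1,\ldots,k_r)\in\alpha}F(k_r,k_1,\ldots,k_{r-1},1)=\sum_{(k_1,\ldots,k_r)\in\alpha}F(k_1,\ldots,k_r,1)$ by cyclic invariance, yielding the third term of the statement. Second, on $b\ge0$ the Vandermonde collapse is valid for $0\le j\le k_r+l_r-1$, whereas $\widetilde{u}^{\star}_{\CSF}$ only runs up to $j=k_r+l_r-2$; I would split off the top term $j=k_r+l_r-1$, whose $w^{\star}$ has last entry $1$. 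It remains to identify this leftover with $\sum_{(k_1,\ldots,k_r)\in\alpha}\sum_{l\ge0}F(1+l,k_1,\ldots,k_r)t^l$: expanding the latter and summing the weight of the first slot with $l+n_0=m$ fixed gives $\sum_{l+n_0=m}(-1)^{1+l}\binom{m}{l}=-(1-1)^m=-\delta_{m,0}$, so every contribution with last entry $>1$ cancels and precisely the $j=k_r+l_r-1$ boundary term survives, with sign $(-1)^{k+1}$. Assembling the Vandermonde main term together with these two boundary corrections gives the claimed identity; keeping the three boundary effects and the cyclic relabelings consistent is where the care is needed.
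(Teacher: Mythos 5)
Your proposal is correct and follows essentially the same route as the paper's proof: expand $F$, observe the constant overall sign $(-1)^{k+1}$, extend the range from $b\ge1$ to $b\ge0$ at the cost of subtracting $\sum_{\alpha}F(k_1,\ldots,k_r,1)$, collapse the two shift parameters by Chu--Vandermonde, and identify the leftover boundary terms with last entry $1$ with $\sum_{l\ge0}F(1+l,k_1,\ldots,k_r)t^l$ via the alternating binomial cancellation. The only difference is cosmetic (the paper parametrizes the Vandermonde step by $a'\mapsto a+a'$ rather than by $j=b+q$, and subtracts the $b=0$ term at the outset), so no further comment is needed.
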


\begin{proof}
From the definition of $F(\bk)$, we have
\begin{align*}
C
&=\sum_{(k_1, \ldots, k_r) \in \alpha}\sum_{\substack{a+b=k_r-1 \\ a\ge0, b\ge0}}\sum_{l_1, \ldots, l_{r-1}, a', b' \ge0}
\binom{a+a'}{a}\binom{b+b'}{b}(-1)^{k_r+1}t^{a'+b'}\\
&\qquad \times\prod_{j=1}^{r-1}\binom{k_j+l_j-1}{l_j}(-1)^{k_j}t^{l_j}
\cdot w^{\star}(1+b+b', k_{r-1}+l_{r-1}, \ldots, k_1+l_1, 1+a+a') \\
&\qquad -\sum_{(k_1, \ldots, k_r) \in \alpha}F(k_1, \ldots, k_r, 1)\\
&=\sum_{(k_1, \ldots, k_r) \in \alpha}
\sum_{\substack{a+b=k_r-1 \\ a \ge 0, b \ge 0}}
\sum_{\substack{l_1, \ldots, l_{r-1} \ge0 \\ a' \ge a, b' \ge b}}
\binom{a'}{a}\binom{b'}{b}(-1)^{k_r+1}t^{a'+b'-k_r+1}\prod_{j=1}^{r-1}\binom{k_j+l_j-1}{l_j}(-1)^{k_j}t^{l_j}\\
&\qquad \times w^{\star}(1+b', k_{r-1}+l_{r-1}, \ldots, k_1+l_1, 1+a')-\sum_{(k_1, \ldots, k_r) \in \alpha}F(k_1, \ldots, k_r, 1).
\end{align*}
Set $l_r:=a'+b'-(k_r-1)$. Then, by Chu-Vandermonde identity, we have
\begin{align*}
&\sum_{\substack{a+b=k_r-1 \\ a \ge 0, b \ge 0}}\sum_{a' \ge a, b' \ge b}
\binom{a'}{a} \binom{b'}{b}t^{a'+b'-(k_r-1)}
w^{\star}(1+b', k_{r-1}+l_{r-1}, \ldots, k_1+l_1, 1+a')\\
&=\sum_{l_r \ge 0}\sum_{a'=0}^{k_r+l_r-1}\sum_{a=0}^{k_r-1}
\binom{a'}{a}\binom{k_r+l_r-1-a'}{k_r-1-a}t^{l_r}
w^{\star}(k_r+l_r-a', k_{r-1}+l_{r-1}, \ldots, k_1+l_1, 1+a')\\
&=\sum_{l_r \ge 0}\sum_{a'=0}^{k_r+l_r-1}
\binom{k_r+l_r-1}{l_r}t^{l_r}
w^{\star}(k_r+l_r-a', k_{r-1}+l_{r-1}, \ldots, k_1+l_1, 1+a')\\
&=\sum_{l_r \ge 0}\sum_{\substack{a'+b'=k_r+l_r-1 \\ a' \ge 0, b' \ge 0}}
\binom{k_r+l_r-1}{l_r}t^{l_r}
w^{\star}(1+b', k_{r-1}+l_{r-1}, \ldots, k_1+l_1, 1+a').
\end{align*}
Therefore, we have
\begin{align*}
C&=-\sum_{(k_1, \ldots, k_r) \in \alpha}
\sum_{l_1, \ldots, l_r \ge0}
\sum_{\substack{a'+b'=k_r-1+l_r \\ a' \ge 0, b' \ge 0}}
\prod_{j=1}^{r}\binom{k_j+l_j-1}{l_j}(-1)^{k_j}t^{l_j}\\
&\qquad \times w^{\star}(1+b', k_{r-1}+l_{r-1}, \ldots, k_1+l_1, 1+a')-\sum_{(k_1, \ldots, k_r) \in \alpha}F(k_1, \ldots, k_r, 1).
\end{align*}
Moreover, since 
\begin{align*}
w^{\star}(k_r+l_r, \ldots, k_1+l_1, 1)
=\sum_{l, l_0 \ge 0}(-1)^{l}\binom{l+l_0}{l_0}t^{l+l_0}
w^{\star}(k_r+l_r, \ldots, k_1+l_1, 1+l+l_0),
\end{align*}
we obtain
\begin{align*}
C&=(-1)^{k+1}\sum_{\bl=(l_1, \ldots, l_r) \in \bZ^r_{\ge0}}
\widetilde{u}^{\star}_{\CSF}(\alpha; \bl)t^{l_1+\cdots+l_r}\\
& \qquad + \sum_{(k_1, \ldots, k_r) \in \alpha}\sum_{l=0}^{\infty}F(1+l, k_1, \ldots, k_r)t^l-\sum_{(k_1, \ldots, k_r) \in \alpha}F(k_1, \ldots, k_r, 1),
\end{align*}
which completes the proof.
\end{proof}

\begin{proof}[Proof of Proposition \ref{prop: key prop}]
From \eqref{eq: calc of B}, Lemmas \ref{lem: calc of A} and \ref{lem: calc of C}, we obtain the formula \eqref{eq: key identity}, which leads to Proposition \ref{prop: key prop}.
\end{proof}

\begin{proof}[Proof of Theorem \ref{thm: second main thm}]
From Proposition \ref{prop: key prop}, it suffices to prove that
\begin{align*}
kw^{\star}_{\widehat{\cS}}(k+1)
=kw^{\star}(k+1)+(-1)^{k+1}\sum_{l_1, \ldots, l_r \ge0}(k+l)
\prod_{j=1}^r \binom{k_j+l_j-1}{l_j}w^{\star}(k+l+1)t^{l},
\end{align*}
where $l:=l_1+\cdots+l_r$. This equality follows immediately from the fact that $w^{\star}(n)=z_n=yx^{n-1}$ for a positive integer $n$ and an easy calculation.
\end{proof}

\begin{proof}[Proof of Theorem \ref{cycShat}]
Theorem \ref{cycShat} follows immediately from applying $Z^{\sh}$ to the both hand sides of Theorem \ref{thm: second main thm}, substituting $T=0$, and using the cyclic sum formula for MZSVs \cite{OW06}. Note that $w^{\star}_{\CSF}(\{1\}^k)=-kw^{\star}(k+1)=-kz_{k+1}$. 
\end{proof}

%%%%%%%%%%%%%%%%%%%%%%%%%%%%%%%%%%%%%%%%%%%%%%%%%%%

\section{Cyclic sum formula for $t$-adic SMZVs}
In this section, we prove the cyclic sum formula for $\zeta_{\widehat{\cS}}(\bk)$ from that for $\zeta^{\star}_{\widehat{\cS}}(\bk)$.
\begin{thm}\label{CSF for tSMZV}
For a non-empty index $\bk=(k_1, \ldots, k_r)$, we have
 \begin{align*}
 &\sum_{i=1}^r \sum_{j=0}^{k_i-2} \zeta_{\widehat{\cS}} (j+1, k_{i+1}, \ldots, k_r, k_1, \ldots, k_{i-1}, k_i-j) \\
 &=\sum_{i=1}^r \sum_{j=0}^\infty 
  \left(
   \zeta_{\widehat{\cS}} (k_i+j+1, k_{i+1},  \ldots, k_r, k_1, \ldots, k_{i-1})
   +\zeta_{\widehat{\cS}} (j+1, k_{i+1}, \ldots, k_r, k_1, \ldots, k_i) 
  \right) t^j \\
  &\quad +\sum_{i=1}^r \zeta_{\widehat{\cS}} (k_{i+1}, \ldots, k_r, k_1, \ldots,k_{i-1},  k_i+1).
\end{align*}
\end{thm}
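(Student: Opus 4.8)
The plan is to deduce Theorem~\ref{CSF for tSMZV} from the cyclic sum formula for the star values, Theorem~\ref{cycShat}, by inverting the comma/plus relationship. Reducing \eqref{eq:tSMZSV by tSMZV} modulo $\zeta(2)\cZ[[t]]$ and using Theorem~\ref{main2}, for every index $\bk'=(k'_1,\dots,k'_s)$ one has
\[
\zeta^{\star}_{\widehat{\cS}}(\bk')=\sum_{\substack{\square\textrm{ is either a comma `,'}\\ \textrm{or a plus `+'}}}\zeta_{\widehat{\cS}}(k'_1\square\cdots\square k'_s)
\]
in $(\cZ/\zeta(2)\cZ)[[t]]$; equivalently, by M\"obius inversion over the poset of mergings, $\zeta_{\widehat{\cS}}(\bk')$ is the corresponding signed sum of $\zeta^{\star}_{\widehat{\cS}}$-values. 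Beyond Theorem~\ref{cycShat}, this is the only input I would use.

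I would argue by induction on the number of parts $r$ of $\bk$; the coarsenings that arise keep the weight $k=\wt(\bk)$ fixed but lower the number of parts, so the induction hypothesis applies to them. Writing out Theorem~\ref{cycShat} for $\bk$ and expanding every $\zeta^{\star}_{\widehat{\cS}}$ in it by the displayed relation, each summand becomes a sum of $\zeta_{\widehat{\cS}}$-values indexed by the coarsenings of the relevant length-$(r{+}1)$ tuple. Many of these match, term by term, the values in Theorem~\ref{CSF for tSMZV}: for example, merging the leading part $j{+}1$ with $k_{i+1}$ on the right of Theorem~\ref{cycShat} yields $\zeta_{\widehat{\cS}}(j{+}1{+}k_{i+1},k_{i+2},\dots,k_i)t^j$, which after the cyclic reindexing $i\mapsto i{+}1$ is exactly the family $\zeta_{\widehat{\cS}}(k_i{+}j{+}1,k_{i+1},\dots,k_{i-1})t^j$. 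Writing $D(\bk)$ (resp.\ $D^{\star}(\bk)$) for the difference of the two sides of Theorem~\ref{CSF for tSMZV} (resp.\ Theorem~\ref{cycShat}), the goal is to establish a relation of the shape
\[
D^{\star}(\bk)=D(\bk)+\sum_{\bk'}c_{\bk,\bk'}\,D(\bk')+(\textrm{correction}),
\]
where $\bk'$ runs over the cyclic coarsenings of $\bk$ with strictly fewer parts and $c_{\bk,\bk'}\in\bZ_{\ge0}$. Since $D^{\star}(\bk)=0$ by Theorem~\ref{cycShat} and $D(\bk')=0$ by the induction hypothesis, this forces $D(\bk)=0$ once the correction is shown to vanish. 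The base case $r=1$ is direct: expanding $\zeta^{\star}_{\widehat{\cS}}(j{+}1,k{-}j)=\zeta_{\widehat{\cS}}(j{+}1,k{-}j)+\zeta_{\widehat{\cS}}(k{+}1)$ converts Theorem~\ref{cycShat} into Theorem~\ref{CSF for tSMZV} term by term.

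The main obstacle is establishing the displayed relation for $D^{\star}$, namely the multiplicity bookkeeping for the coarser terms, and two features make it delicate. First, the left side of Theorem~\ref{CSF for tSMZV} is a finite sum $\sum_{j=0}^{k_i-2}$ while the right side carries infinite series $\sum_{j=0}^{\infty}(\cdots)t^j$, so matching the coarsened contributions genuinely forces the coarser formulas to be fed back in, just as in the $r=1$ step where $\sum_{j\ge0}\bigl(\zeta_{\widehat{\cS}}(k{+}j{+}1)+\zeta_{\widehat{\cS}}(j{+}1,k)\bigr)t^j$ rewrites a split sum as a series. Second, the term $-\delta_{\bk,(\{1\}^r)}(1+(-1)^{k+1})k\zeta(k{+}1)$ of Theorem~\ref{cycShat}, which has no analogue in Theorem~\ref{CSF for tSMZV}, must cancel against the genuine values $\zeta(k{+}1)$ produced by the coarsenings; this requires a separate check at the exceptional indices $\bk=(\{1\}^r)$, noting it already vanishes unless $r$ is odd. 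I would first work the cases $r\le2$ in full to fix the coefficients $c_{\bk,\bk'}$ and the cancellation of the correction, and only then set up the general cyclic regrouping; one could equally carry out this regrouping at the level of words in $\fH^1[[t]]$ via the word analogue of the comma/plus relation and Theorem~\ref{thm: second main thm}.
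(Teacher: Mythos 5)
Your overall strategy is the same as the paper's: deduce Theorem \ref{CSF for tSMZV} from Theorem \ref{cycShat} by exploiting the comma/plus relation between star and non-star values (the paper's Lemma \ref{111}) and regrouping over cyclic coarsenings. But as written the proposal has a genuine gap: the identity you need,
\begin{align*}
D^{\star}(\bk)=D(\bk)+\sum_{\bk'}c_{\bk,\bk'}\,D(\bk')+(\textrm{correction}),
\end{align*}
is only asserted ``in shape,'' with the coefficients $c_{\bk,\bk'}$ and the correction left to be determined later. This identity \emph{is} the theorem, in the sense that essentially all of the work in the paper's Section 6 (Lemmas \ref{111}, \ref{112} and Propositions \ref{prop1}, \ref{prop2}, \ref{prop3}) goes into establishing the equivalent regrouping. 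The matching is not term-by-term in any naive sense: for instance, already at $r=2$ the depth-one coarsenings of the left-hand sum $\sum_{i}\sum_{j=0}^{k_i-2}(j+1,k_{i+1},k_i-j)$ produce the ranges $0\le j\le k_i-2$ and $k_{i+1}\le j\le k_1+k_2-2$ of $(j+1,k_1+k_2-j)$, which only reassemble into complete cyclic sums for $(k_1+k_2)$ after a nontrivial reindexing and telescoping (this is what the chain of substitutions $s$, $(a,b)$ in the proof of Proposition \ref{prop2} accomplishes), and the $j=l_i-1$ boundary terms must be split off and treated separately (Proposition \ref{prop3}). Until that combinatorics is carried out, the proof is a plan rather than an argument.

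Two further comments. First, the paper avoids your induction entirely: instead of expressing $D^{\star}(\bk)$ in terms of $D$ of $\bk$ and its coarsenings, it proves the single alternating-sum identity
\begin{align*}
\CSF_{\widehat{\cS}}(\bk)=\sum_{m=0}^{r-1}(-1)^m\sum_{(l_1,\ldots,l_{r-m})\in S_m(\bk)}\CSF^{\star}_{\widehat{\cS}}(l_1,\ldots,l_{r-m}),
\end{align*}
and then uses that every term on the right vanishes by Theorem \ref{cycShat}; your version is the M\"obius inverse of this and would be no easier to prove. Second, your treatment of the exceptional term $-\delta_{\bk,(\{1\}^r)}(1+(-1)^{k+1})k\zeta(k+1)$ is off: nothing needs to ``cancel against values produced by the coarsenings.'' Since everything here lives in $(\cZ/\zeta(2)\cZ)[[t]]$, that term is simply zero: either $k$ is even and the factor $1+(-1)^{k+1}$ vanishes, or $k$ is odd and $\zeta(k+1)\in\zeta(2)\cZ$ because $k+1$ is even. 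Your base case $r=1$ is correct once this observation is made explicit.
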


\subsection{Preliminary}
Let $\cR$ be the $\bQ$-vector space generated by all the indices. For an index $\bk=(k_1, \ldots ,k_r)$, set
\begin{align*}
\bk^{\star}:=\sum_{ \square =,\textrm{ or } +} (k_1 \square \cdots \square k_r) \in \cR.
\end{align*}
\begin{lem} \label{111}
For a non-empty index $\bk=(k_1, \ldots, k_r)$, we have
\[
\sum_{m=0}^{r-1} (-1)^m 
\sum_{ \substack{\square =,\textrm{ or } + \\ \#(+)=m} } 
(k_1\square \cdots \square k_r)^\star
=(k_1,\dots,k_r).
\]
\end{lem}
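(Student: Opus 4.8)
The plan is to read both sides as formal $\bQ$-linear combinations of indices in $\cR$ and to match coefficients by an inclusion–exclusion argument. Fix the $r-1$ gaps between consecutive entries $k_i,k_{i+1}$ (for $1\le i\le r-1$), and encode any assignment of commas and pluses to these gaps by the subset $P\subseteq\{1,\dots,r-1\}$ of gaps carrying a plus. Writing $\bk_P$ for the index obtained from $(k_1,\dots,k_r)$ by summing together the entries joined by the pluses indexed by $P$, the inner sum $\sum_{\square,\,\#(+)=m}$ on the left runs over all $P$ with $|P|=m$, the overall sign is $(-1)^m=(-1)^{|P|}$, and each term contributes $\bk_P^{\star}$.

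Next I would unfold the star operation. By definition $\bk_P^{\star}$ is the sum over all indices obtained from $\bk_P$ by inserting further pluses into its gaps; but the gaps of $\bk_P$ are exactly the gaps of $\bk$ lying in the complement $\{1,\dots,r-1\}\setminus P$, so inserting an additional plus-set $Q\subseteq\{1,\dots,r-1\}\setminus P$ produces precisely $\bk_{P\cup Q}$. Hence the entire left-hand side equals $\sum_{P}\sum_{Q\subseteq\{1,\dots,r-1\}\setminus P}(-1)^{|P|}\,\bk_{P\cup Q}$, where the sum is taken over disjoint pairs $(P,Q)$.

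Then I would reindex by the total plus-set $S:=P\sqcup Q$. For each fixed $S\subseteq\{1,\dots,r-1\}$, the index $\bk_S$ arises once for every splitting $S=P\sqcup Q$, that is, once for every subset $P\subseteq S$; so its coefficient is $\sum_{P\subseteq S}(-1)^{|P|}=\sum_{j=0}^{|S|}\binom{|S|}{j}(-1)^j=(1-1)^{|S|}$. This vanishes whenever $|S|\ge 1$ and equals $1$ when $S=\varnothing$. Since $\bk_{\varnothing}=(k_1,\dots,k_r)$, only this single term survives, which is exactly the asserted identity.

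The main obstacle—really the only delicate point—is the bijective bookkeeping in the second step: one must verify that the two-stage insertion of pluses (first the primary set $P$, then a disjoint set $Q$ via the star) corresponds bijectively to a single pair $(S,P)$ with $P\subseteq S:=P\sqcup Q$, so that for fixed $S$ the contributing pairs with $|P|=j$ are counted by $\binom{|S|}{j}$. This requires checking that forming blocks and then re-splitting them never collides or double-counts, i.e.\ that the composite really is $\bk_{P\cup Q}$ with multiplicity one. Granting this identification, the binomial cancellation above is immediate and completes the proof.
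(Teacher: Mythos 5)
Your argument is correct, and it takes a genuinely different route from the paper. The paper proves Lemma \ref{111} by induction on $r$: it splits the first separator $\square_1$ into its comma and plus cases, applies the induction hypothesis to the shorter indices $(k_1,k_2\square\cdots\square k_r)$ and $(k_1+k_2\square\cdots\square k_r)$, and observes that the resulting sums over the remaining separator $\triangle$ collapse by cancellation. You instead give a direct, non-inductive inclusion--exclusion over the Boolean lattice of plus-positions: identifying each choice of separators with a subset $P\subseteq\{1,\dots,r-1\}$, unfolding the star as a further sum over $Q$ disjoint from $P$, and reindexing by $S=P\sqcup Q$ so that the coefficient of $\bk_S$ is $(1-1)^{|S|}$. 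The one point you flag as delicate is indeed the crux, but it holds without difficulty: because summing consecutive entries is associative, the gaps of $\bk_P$ are canonically identified with $\{1,\dots,r-1\}\setminus P$ and $(\bk_P)_Q=\bk_{P\cup Q}$, so each pair $(P,Q)$ with $P\cap Q=\varnothing$ contributes $\bk_{P\cup Q}$ exactly once and the binomial cancellation is legitimate. Your approach is arguably more illuminating --- it exhibits the identity as a M\"obius inversion on the subset lattice, making clear why only the all-commas term survives --- while the paper's induction is shorter to write down and fits the style of the surrounding telescoping arguments. Either proof is acceptable.
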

\begin{proof}
We prove this lemma by induction on $r$. 
The case $r=1$ is trivial. 
In the general case, by the induction hypothesis, we have 
\begin{align*}
\textrm{L.H.S.} 
&=\sum_{m=0}^{r-2} (-1)^m 
\sum_{ \substack{\square =,\textrm{ or } + \\ \#(+)=m} } 
(k_1,k_2\square \cdots \square k_r)^\star 
-\sum_{m=0}^{r-2} (-1)^m 
\sum_{ \substack{\square =,\textrm{ or } + \\ \#(+)=m} } 
(k_1+k_2\square \cdots \square k_r)^\star \\
&=\sum_{ \triangle =,\textrm{ or } + } 
(k_1\triangle k_2,\dots,k_r) 
-(k_1+k_2,k_3,\dots,k_r) \\
&=\textrm{R.H.S.} \qedhere
\end{align*}
\end{proof}
Hereafter, we want to calculate a sum over an index set such like
\begin{align*}
\{(k_{i+1}, \ldots, k_r, k_1, \ldots, k_i) \mid 1 \le i \le r\}.
\end{align*}
However, same indices may appear more than once in such an expression. 
To treat such overlaps properly, we introduce the following abuse of notation. 
For $M:=\sum_{\bk} c_{\bk} \cdot \bk \in \cR$, we set
\begin{align*}
\sum_{\bk \in M}f(\bk):=\sum_{\bk} c_{\bk} \cdot f(\bk).
\end{align*}
For example, for $M=(2,3)+2(5, 7)$, we have $\sum_{(k_1, k_2)\in M}(k_1, k_2+1)=(2,4)+2(5,8)$.

For a non-empty index $\boldsymbol{k}=(k_{1},\dots,k_{r})$ and $m\in\mathbb{Z}_{\ge0}$ with $m \le r-1$, set 
\[
S_m(\boldsymbol{k})
:=
\sum_{ \substack{\square =,\textrm{ or } + \\ \#(+)=m} } 
(k_{1}\square \cdots \square k_r \square),
\]
where
\begin{align*}
(k_1\square_1 \cdots \square_{r-1} k_r \square_r):=(k_{j+1}\square_{j+1} \cdots \square_{r-1} k_r \square_r k_1 \square_1 \cdots \square_{j-1} k_j)
\end{align*}
with $1 \le j \le r$ such that $\square_j=\ , \ $. 
This definition depends on the choice of $j$, but hereafter we only consider sums such that this ambiguity does not concern.
We note that all indices appearing in $S_m(\boldsymbol{k})$ have the same depth$(=r-m)$.

The following lemma is obvious so we omit the proof.
\begin{lem} \label{112}
For a non-empty index $\boldsymbol{k}=(k_{1},\dots,k_{r})$ and $m\in\mathbb{Z}_{\ge0}$ with $m\le r-1$, we have  
\begin{align*}
\sum_{i=1}^{r-m} \sum_{(l_1,\dots,l_{r-m})\in S_m(\boldsymbol{k})}
   (l_{i+1},\dots,l_{r-m},l_1,\dots,l_i)
=\sum_{ \substack{\square =,\textrm{ or } + \\ \#(+)=m} } 
\sum_{i=1}^{r} (k_{i+1}\square \cdots \square k_r\square k_1 \square \cdots \square k_i).
\end{align*}
\end{lem}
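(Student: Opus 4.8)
The plan is to show that both sides of the asserted identity enumerate, as elements of $\cR$, the same multiset of indices, where the indexing set is the collection of pairs consisting of a cyclic placement of symbols together with a choice of comma at which to cut. First I would fix the following bookkeeping device. For $m \le r-1$, call a \emph{cyclic placement} a choice of symbols $\square_1, \ldots, \square_r$, one in each of the $r$ cyclic gaps between consecutive entries of $\boldsymbol{k}$ (including the gap joining $k_r$ back to $k_1$), with exactly $m$ of them equal to $+$; such a placement then has exactly $r-m$ commas. Cutting the cyclic word $k_1 \square_1 \cdots k_r \square_r$ at a comma $\square_j$ produces, by the very definition of the bracket notation, the depth-$(r-m)$ index $(k_{j+1} \square_{j+1} \cdots \square_{r-1} k_r \square_r k_1 \cdots \square_{j-1} k_j)$, in which all $m$ pluses survive as internal $+$'s precisely because the dropped symbol $\square_j$ is a comma.

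The key observation, which is exactly what renders the ambiguity in the definition of $S_m(\boldsymbol{k})$ harmless, is that for a fixed cyclic placement $\sigma$ the $r-m$ indices obtained by cutting at its various commas are precisely the $r-m$ distinct cyclic rotations of any one of them. Indeed, re-choosing the comma at which one cuts shifts the starting entry $k_{j+1}$ to the next comma-delimited block, which is nothing but a cyclic rotation of the resulting depth-$(r-m)$ index. Thus, writing $(l_1, \ldots, l_{r-m})$ for the index that $\sigma$ contributes to $S_m(\boldsymbol{k})$, I would record that
\[
\sum_{i=1}^{r-m} (l_{i+1}, \ldots, l_{r-m}, l_1, \ldots, l_i)
= \sum_{\substack{1 \le j \le r \\ \square_j \text{ a comma}}} (k_{j+1} \square \cdots \square k_j),
\]
and in particular that the right-hand side is independent of the representative chosen for $\sigma$ in $S_m(\boldsymbol{k})$.

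With this in hand the lemma reduces to a reindexing, which I would carry out by summing the displayed identity over all cyclic placements $\sigma$. The resulting left-hand side is exactly $\sum_{i=1}^{r-m}\sum_{(l_1,\ldots,l_{r-m}) \in S_m(\boldsymbol{k})} (l_{i+1}, \ldots, l_{r-m}, l_1, \ldots, l_i)$, while the resulting right-hand side is $\sum_{\square,\,\#(+)=m}\sum_{i=1}^{r}(k_{i+1}\square \cdots \square k_i)$, the inner sum ranging over those $i$ for which $\square_i$ is a comma, as is required for the bracket to be defined in the first place. These are the two sides of the lemma, and matching terms under the bijection ``(placement, comma) $\leftrightarrow$ (placement, rotation)'' automatically respects all multiplicities in $\cR$.

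The only real point to verify carefully is the key observation itself, namely that cutting $\sigma$ at its distinct commas yields cyclic rotations of a single index and nothing else; I expect this to be the main (and essentially the only) obstacle. Once one checks that no two distinct commas of $\sigma$ give the same rotation and that every rotation arises in this way, so that the $r-m$ commas correspond bijectively to the $r-m$ rotations, both the well-definedness of $S_m(\boldsymbol{k})$ and the identity follow at once. This is precisely why the statement may be regarded as obvious.
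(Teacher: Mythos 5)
Your argument is correct: the paper omits the proof of this lemma as ``obvious,'' and your reindexing via the bijection (cyclic placement, comma at which to cut) $\leftrightarrow$ (rotation amount $i$, linear placement of the $m$ pluses in the remaining $r-1$ gaps) is exactly the verification the authors have in mind. One wording caveat: the $r-m$ indices obtained by cutting a fixed placement at its commas need not be \emph{distinct} when the sequence of comma-delimited blocks is periodic, so your closing sentence (``no two distinct commas of $\sigma$ give the same rotation'') is not literally true as stated about indices; what holds, and what your displayed identity together with the multiplicity remark actually uses, is that the commas correspond bijectively to the rotation amounts $i\in\{1,\dots,r-m\}$, so the two sides agree as multisets of indices and hence as elements of $\cR$.
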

\begin{rem} \label{rem}
Similar to Lemma \ref{112}, we also have
\begin{align*}
\sum_{i=1}^{r-m} 
\sum_{(l_1,\dots,l_{r-m})\in S_m(\boldsymbol{k})}
(l_{i+1},\dots,l_{r-m},l_1,\dots,l_i)^\star
=\sum_{ \substack{\square =,\textrm{ or } + \\ \#(+)=m} } 
\sum_{i=1}^{r} (k_{i+1}\square \cdots \square k_r\square k_1 \square \cdots \square k_i)^\star.
\end{align*}
\end{rem}

%%%%%%%%%%%%%%%%%%%%%%%%%%%%%%%%%%%%%%%%%%%%%%%%%
\subsection{Proof of the theorem}
To prove Theorem \ref{CSF for tSMZV} from Theorem \ref{cycShat}, we need Propositions \ref{prop1} and \ref{prop2}.  
\begin{prop}\label{prop1}
For a non-empty index $\bk=(k_1, \ldots ,k_r)$ and a non-negative integer $j$, we have
\begin{align*}
&\sum_{i=1}^{r} \bigl\{(j+1+k_i,k_{i+1},\dots,k_r,k_1,\dots,k_{i-1}) +(j+1,k_{i},\dots,k_r,k_1,\dots, k_{i-1})\bigr\} \\
&=\sum_{m=0}^{r-1} (-1)^m 
\sum_{i=1}^{r-m} 
\sum_{(l_1,\dots,l_{r-m})\in S_m(\boldsymbol{k})}
(j+1,l_{i+1},\dots,l_{r-m},l_1,\dots,l_i)^\star. 
\end{align*}
\end{prop}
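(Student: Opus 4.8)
The plan is to rewrite the right-hand side of the proposition until it collapses to the left-hand side, using two ingredients: a version of Remark \ref{rem} carrying a fixed prefix $j+1$, and a prefixed version of Lemma \ref{111}. First I would record the following prefixed form of Remark \ref{rem}: for each $m$ with $0\le m\le r-1$,
\begin{align*}
\sum_{i=1}^{r-m}\sum_{(l_1,\dots,l_{r-m})\in S_m(\bk)}(j+1,l_{i+1},\dots,l_{r-m},l_1,\dots,l_i)^\star
=\sum_{\substack{\square=,\text{ or }+\\ \#(+)=m}}\sum_{i=1}^{r}(j+1,k_{i+1}\square\cdots\square k_i)^\star.
\end{align*}
This is proved exactly as Lemma \ref{112} and Remark \ref{rem}: the entry $j+1$ sits in front of the starred argument and takes no part in the cyclic bijection, so the same reindexing applies verbatim. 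Substituting this into the definition of the right-hand side of Proposition \ref{prop1} and interchanging the finite sums over $i$ and $m$ turns the right-hand side into
\begin{align*}
\sum_{i=1}^{r}\ \sum_{m=0}^{r-1}(-1)^m\sum_{\substack{\square=,\text{ or }+\\ \#(+)=m}}(j+1,k_{i+1}\square\cdots\square k_i)^\star.
\end{align*}

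The heart of the argument is then the following prefixed analogue of Lemma \ref{111}: for a positive integer $a$ and a non-empty index $(c_1,\dots,c_r)$,
\begin{align*}
\sum_{m=0}^{r-1}(-1)^m\sum_{\substack{\square=,\text{ or }+\\ \#(+)=m}}(a,c_1\square\cdots\square c_r)^\star
=(a,c_1,\dots,c_r)+(a+c_1,c_2,\dots,c_r),
\end{align*}
where the $\square$ range over the $r-1$ gaps among $c_1,\dots,c_r$ and the comma after $a$ is kept fixed. Rather than reprove this by induction, I would deduce it directly from Lemma \ref{111}. Applying Lemma \ref{111} to the depth-$(r+1)$ index $(a,c_1,\dots,c_r)$ and splitting the outer sum according to whether the first gap (the one between $a$ and $c_1$) is a comma or a plus, the comma part is precisely the left-hand side above, while the plus part merges $a$ and $c_1$ and, after absorbing the extra sign coming from $\#(+)\mapsto\#(+)-1$, equals $-\sum_{m}(-1)^m\sum_{\#(+)=m}((a+c_1)\square c_2\cdots\square c_r)^\star$; by Lemma \ref{111} applied to $(a+c_1,c_2,\dots,c_r)$ this is $-(a+c_1,c_2,\dots,c_r)$. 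Rearranging gives the displayed identity.

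Finally I would apply this prefixed identity with $a=j+1$ and $(c_1,\dots,c_r)=(k_{i+1},\dots,k_r,k_1,\dots,k_i)$ to each summand of the reindexed right-hand side, obtaining
\begin{align*}
\sum_{i=1}^{r}\Bigl\{(j+1+k_{i+1},k_{i+2},\dots,k_r,k_1,\dots,k_i)+(j+1,k_{i+1},\dots,k_r,k_1,\dots,k_i)\Bigr\}.
\end{align*}
Shifting the summation index cyclically ($i\mapsto i-1$) in both families identifies this with the left-hand side of Proposition \ref{prop1}, completing the proof. I expect the only genuinely delicate point to be the bookkeeping: making sure the ranges of $m$, the gap counts, and the cyclic read-off implicit in $S_m(\bk)$ are matched so that the prefixed form of Remark \ref{rem} is literally the same combinatorial bijection, and that the closing cyclic reindexing lines up the two families of indices exactly. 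The algebraic content, by contrast, is light, since the key identity reduces to two applications of Lemma \ref{111}.
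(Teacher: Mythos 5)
Your proposal is correct and follows essentially the same route as the paper: the paper's proof also rests on the prefixed identity $\sum_{m}(-1)^m\sum_{\#(+)=m}(j+1,c_1\square\cdots\square c_r)^\star=(j+1,c_1,\dots,c_r)+(j+1+c_1,c_2,\dots,c_r)$ (asserted there directly ``by Lemma \ref{111}'', which you derive more explicitly via the first-gap splitting) combined with the prefixed form of Remark \ref{rem}, merely run in the direction LHS $\to$ RHS instead of RHS $\to$ LHS. No gaps.
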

\begin{proof}
By Lemma \ref{111}, we have
\begin{align*}
&(j+1+k_{i},k_{i+1},\dots,k_r, k_1,\dots,k_{i-1}) +(j+1 ,k_{i},\dots,k_r,k_1,\dots, k_{i-1}) \\
&=\sum_{m=0}^{r-1} (-1)^m 
\sum_{ \substack{\square =,\textrm{ or } + \\ \#(+)=m} } 
(j+1,k_{i}\square \cdots \square k_r\square k_1 \square \cdots \square k_{i-1})^\star. 
\end{align*} 
Then, by Remark \ref{rem}, we have 
\begin{align*}
\textrm{L.H.S.} 
&=\sum_{m=0}^{r-1} (-1)^m 
\sum_{ \substack{\square =,\textrm{ or } + \\ \#(+)=m} }
\sum_{i=1}^{r} 
(j+1, k_{i}\square \cdots \square k_r\square k_1 \square \cdots \square k_{i-1})^\star \\ 
&=\textrm{R.H.S.} \qedhere
\end{align*}
\end{proof}

\begin{prop}\label{prop2}
For a positive integer $k$ and an index $\bk=(k_1, \ldots, k_r)$ with $\wt(\bk)=k$, we have
\begin{align*}
&\sum_{m=0}^{r-1}(-1)^m\sum_{i=1}^{r-m}\sum_{(l_1, \ldots, l_{r-m}) \in S_m(\bk)}\sum_{j=0}^{l_i-1}
(j+1, l_{i+1}, \ldots, l_{r-m}, l_1, \ldots, l_{i-1}, l_i-j)^{\star}\\
&=\sum_{i=1}^r\sum_{j=0}^{k_i-1}(j+1, k_{i+1}, \ldots, k_r, k_1, \ldots, k_{i-1}, k_i-j)-(-1)^rk \cdot (k+1)^{\star}.
\end{align*}
\end{prop}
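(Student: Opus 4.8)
The plan is to prove the identity entirely inside $\cR$ and to reduce it to the inversion Lemma \ref{111}, running the strategy of Proposition \ref{prop1} in reverse. Writing the left-hand side as $\sum_{m=0}^{r-1}(-1)^m\sum_{i=1}^{r-m}\sum_{(l_1,\dots,l_{r-m})\in S_m(\bk)}\sum_{j=0}^{l_i-1}(j+1,l_{i+1},\dots,l_{i-1},l_i-j)^\star$, I would expand each starred symbol into its sum over comma/plus patterns of the dividers and split the computation into the contribution of the unique all-plus pattern and the contribution of the rest.

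First I would isolate the all-plus pattern. The entries of $(j+1,l_{i+1},\dots,l_{i-1},l_i-j)$ sum to $(j+1)+\sum_{t\ne i}l_t+(l_i-j)=k+1$, so their all-plus contraction is the single-entry index $(k+1)=(k+1)^\star$. Its total coefficient is $\sum_{m=0}^{r-1}(-1)^m\sum_{(l)\in S_m(\bk)}\sum_{i=1}^{r-m}\sum_{j=0}^{l_i-1}1=\sum_{m=0}^{r-1}(-1)^m\sum_{(l)\in S_m(\bk)}\sum_{i=1}^{r-m}l_i$. Since $\sum_i l_i=\wt(\bk)=k$ for every element of $S_m(\bk)$, and $S_m(\bk)$ has mass $\binom{r}{m}$, this equals $k\sum_{m=0}^{r-1}(-1)^m\binom{r}{m}=-(-1)^r k$, using $\sum_{m=0}^{r}(-1)^m\binom{r}{m}=0$. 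Thus the all-plus terms reproduce exactly the correction $-(-1)^r k\,(k+1)^\star$, and it remains to show that the non-all-plus terms sum to $\sum_{i=1}^r\sum_{j=0}^{k_i-1}(j+1,k_{i+1},\dots,k_{i-1},k_i-j)$.

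For this main step I would invert the right-hand side: apply Lemma \ref{111} to each bare target $(j+1,k_{i+1},\dots,k_{i-1},k_i-j)$, a depth-$(r+1)$ index, to rewrite it as an alternating comma/plus-then-star sum over its $r$ dividers, and then reorganise the resulting sum over the cyclic rotations $i$ and the contractions by means of Lemma \ref{112} and Remark \ref{rem}, so as to match the $S_m(\bk)$-form on the left. To align the two expressions I would substitute $a=j+1$, $b=l_i-j$, turning the inner sum into $\sum_{a+b=l_i+1,\ a,b\ge1}(a,l_{i+1},\dots,l_{i-1},b)^\star$, and expand each star according to its front divider (adjacent to $a$) and its back divider (adjacent to $b$), just as the front-divider split drove the proof of Proposition \ref{prop1}.

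The main obstacle is precisely this second step. On the left the value-level split is applied to an \emph{already contracted} block $l_i$, which may be a sum of several $k_t$, whereas in the Lemma \ref{111} expansion of the right-hand side the split is applied to the single entry $k_i$ \emph{before} contracting. Reconciling these two orders of operations forces every intermediate-depth symbol to cancel, and I expect this cancellation to reduce to a Chu--Vandermonde identity in the split variables $a,b$, in the same spirit as the computation of $C$ in Lemma \ref{lem: calc of C}; the low-depth cases already exhibit the telltale indicator/binomial cancellation. Carrying this out uniformly in $r$, while respecting the cyclic-representative ambiguity in $S_m(\bk)$, is the delicate part, and the depth-$1$ discrepancy it leaves behind is exactly what the first step has already absorbed into $-(-1)^r k\,(k+1)^\star$.
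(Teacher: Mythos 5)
Your first step is sound and does produce the correction term: the all-plus contraction of each $(j+1,l_{i+1},\dots,l_{i-1},l_i-j)^{\star}$ is the single-entry index $(k+1)$, and since each $S_m(\bk)$ carries total mass $\binom{r}{m}$ with $\sum_i l_i=k$ on every summand, its total coefficient is $k\sum_{m=0}^{r-1}(-1)^m\binom{r}{m}=-(-1)^rk$, matching $-(-1)^rk\cdot(k+1)^{\star}$. But this is the easy part of the proposition. What remains — and what you only announce — is the identity
\begin{align*}
&\sum_{m=0}^{r-1}(-1)^m\sum_{i=1}^{r-m}\sum_{(l_1,\dots,l_{r-m})\in S_m(\bk)}\sum_{j=0}^{l_i-1}(j+1,l_{i+1},\dots,l_{r-m},l_1,\dots,l_{i-1},l_i-j)^{\star}\\
&\qquad=\sum_{i=1}^{r}\sum_{j=0}^{k_i-1}\;\sum_{m=0}^{r-1}(-1)^m
\sum_{ \substack{\square =,\textrm{ or } + \\ \#(+)=m} }
(j+1\,\square\, k_{i+1}\square\cdots\square k_{i-1}\,\square\, k_i-j)^{\star},
\end{align*}
after which Lemma \ref{111} (whose $m=r$ term regenerates the $(-1)^rk\cdot(k+1)^{\star}$ you split off) finishes the proof. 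You defer exactly this step with ``I expect this cancellation to reduce to a Chu--Vandermonde identity'' and call it ``the delicate part''; since this matching \emph{is} the content of the proposition, that is a genuine gap rather than a routine verification.

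Moreover, the mechanism you guess at points in the wrong direction. The paper establishes the displayed identity by a pure reindexing with no binomial identities: for a fixed cyclic rotation $(k_1',\dots,k_r')$ the split block $l_i$ is a contracted run of consecutive entries, the cut position $j\in\{0,\dots,l_i-1\}$ is classified by which constituent $k_s'$ it falls into, and the two outer entries $j+1+(\text{front of the run})$ and $(\text{back of the run})-j$ are then reabsorbed as extra plus-signs on the extended word $(j+1,k_1',\dots,k_r'-j)$, the count of plus-signs being what keeps the sign $(-1)^m$ coherent through the change of summation variables. Chu--Vandermonde is the engine of Lemma \ref{lem: calc of C}, where genuine binomial weights $\binom{k_j+l_j-1}{l_j}$ occur; here there are none. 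Likewise, ``respecting the cyclic-representative ambiguity'' is precisely the bookkeeping that Lemma \ref{112} and Remark \ref{rem} were designed to discharge and that you would need to run explicitly over the rotation class $\sum_{i}(k_{i+1},\dots,k_r,k_1,\dots,k_i)$. Until that reindexing is written down, your argument is a plausible plan, not a proof.
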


\begin{proof}
Set $\alpha(\bk):=\sum_{i=1}^r(k_{i+1}, \ldots, k_r, k_1, \ldots, k_i)$. Then we have
\begin{align*}
&\textrm{L.H.S.} \\
&=\sum_{m=0}^{r-1}(-1)^m\sum_{(k'_1, \ldots, k'_r) \in \alpha(\bk)}\sum_{i=1}^r
\sum_{j=0}^{k'_1+\cdots+k'_i-1}\sum_{ \substack{\square =,\textrm{ or } + \\ \#(+)=m-(i-1)} }
(j+1, k'_{i+1}\square \cdots \square k'_r, k'_1+\cdots+k'_i-j)^{\star}\\
&=\sum_{m=0}^{r-1}(-1)^m\sum_{(k'_1, \ldots, k'_r) \in \alpha(\bk)}\sum_{i=1}^r
\sum_{s=1}^i\sum_{j=0}^{k'_s-1}\sum_{ \substack{\square =,\textrm{ or } + \\ \#(+)=m-(i-1)} }\\
&\qquad \qquad \qquad \qquad
(j+1+k'_{s+1}+\cdots+k'_i, k'_{i+1}\square \cdots \square k'_r, k'_1+\cdots+k'_s-j)^{\star}\\
&=\sum_{m=0}^{r-1}(-1)^m\sum_{(k'_1, \ldots, k'_r) \in \alpha(\bk)}\sum_{0 \le a \le b<r}
\sum_{j=0}^{k'_r-1}\sum_{ \substack{\square =,\textrm{ or } + \\ \#(+)=m-r-a+b+2} }\\
&\qquad \qquad \qquad 
(j+1+k'_{1}+\cdots+k'_{a}, k'_{a+1}\square \cdots \square k'_{b}, k'_{b+1}+\cdots+k'_{r}-j)^{\star}\\
&=\sum_{m=0}^{r-1}(-1)^m\sum_{(k'_1, \ldots, k'_r) \in \alpha(\bk)}
\sum_{j=0}^{k'_r-1}\sum_{ \substack{\square =,\textrm{ or } + \\ \#(+)=m} }
(j+1 \square k'_1 \square \cdots \square k'_{r-1} \square k'_r-j)^{\star}\\
&=\sum_{(k'_1, \ldots, k'_r) \in \alpha(\bk)}\sum_{j=0}^{k'_r-1}
\bigl\{(j+1, k'_1, \ldots, k'_{r-1}, k'_r-j)-(-1)^r(j+1+k'_1+\cdots+k'_r-j)\bigr\}\\
&=\textrm{R.H.S.},
\end{align*}
which completes the proof.
\end{proof}

\begin{prop}\label{prop3}
For a non-empty index $\bk=(k_1, \ldots, k_r)$, we have
\begin{align*}
&\sum_{m=0}^{r-1}(-1)^m \sum_{(l_1, \ldots, l_{r-m}) \in S_m(\bk)}\sum_{i=1}^{r-m}
(l_i, \ldots, l_{r-m}, l_1, \ldots, l_{i-1}, 1)^{\star}\\
&=
\sum_{i=1}^r\bigl\{(k_{i+1}, \ldots, k_r, k_1, \ldots, k_i, 1)+(k_{i+1}, \ldots, k_r, k_1, \ldots, k_{i-1}, k_i+1)\bigr\}.
\end{align*}
\end{prop}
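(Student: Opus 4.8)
The plan is to transform the right-hand side into the left-hand side, mirroring the proof of Proposition \ref{prop1}: first convert the plain indices appearing on the right into alternating sums of starred compositions by means of Lemma \ref{111}, and then reorganize these using the cyclic bookkeeping of Remark \ref{rem}. Fix a cyclic rotation and write $\bk^{(i)}:=(k_{i+1},\dots,k_r,k_1,\dots,k_i)$ for the rotation whose last entry is $k_i$. The two summands attached to this rotation on the right are $T_1^{(i)}:=(\bk^{(i)},1)$, obtained by appending $1$, and $T_2^{(i)}:=(k_{i+1},\dots,k_r,k_1,\dots,k_{i-1},k_i+1)$, obtained by incrementing the last entry.

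First I would apply Lemma \ref{111} to the depth-$(r+1)$ index $(\bk^{(i)},1)$ to write $T_1^{(i)}=\sum_{m}(-1)^m\sum_{\#(+)=m}(k_{i+1}\square\cdots\square k_i\square 1)^\star$, and then split this sum according to whether the final connector, the one between $k_i$ and the appended $1$, is a comma or a plus. The comma part is exactly $\sum_{m}(-1)^m\sum_{\#(+)=m}(k_{i+1}\square\cdots\square k_i,1)^\star$, while in the plus part the trailing $1$ merges into $k_i$; after the shift $m\mapsto m-1$ this plus part is, by a second application of Lemma \ref{111} to the depth-$r$ index $(k_{i+1},\dots,k_{i-1},k_i+1)$, precisely $-T_2^{(i)}$. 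Hence
\[
T_1^{(i)}+T_2^{(i)}
=\sum_{m=0}^{r-1}(-1)^m
\sum_{\substack{\square=,\textrm{ or }+\\ \#(+)=m}}
(k_{i+1}\square\cdots\square k_i,1)^\star .
\]

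Summing this identity over $i=1,\dots,r$ and interchanging the order of summation, the right-hand side of the proposition becomes $\sum_{m=0}^{r-1}(-1)^m\sum_{i=1}^r\sum_{\#(+)=m}(k_{i+1}\square\cdots\square k_i,1)^\star$. Finally I would invoke the appended-$1$ version of Remark \ref{rem}: since the suffix $,1$ rides along unchanged on every term, the same bijection underlying Lemma \ref{112} converts $\sum_{i=1}^r\sum_{\#(+)=m}(k_{i+1}\square\cdots\square k_i,1)^\star$ into $\sum_{i=1}^{r-m}\sum_{(l_1,\dots,l_{r-m})\in S_m(\bk)}(l_{i+1},\dots,l_{r-m},l_1,\dots,l_i,1)^\star$, which after the harmless reindexing $i\mapsto i-1$ of the cyclic rotation is the left-hand side. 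The main obstacle is the first step: correctly isolating the last comma/plus connector in the Lemma \ref{111} expansion of $T_1^{(i)}$ and recognizing that its plus contribution reassembles, via the second use of Lemma \ref{111}, into $-T_2^{(i)}$. Once this telescoping between ``append $1$'' and ``increment the last entry'' is in hand, the remaining steps are the routine cyclic bookkeeping already used for Propositions \ref{prop1} and \ref{prop2}.
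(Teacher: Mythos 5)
Your proposal is correct and is essentially the paper's own argument run in the opposite direction: the paper starts from the left-hand side, applies Remark \ref{rem} (the starred form of Lemma \ref{112}), splits each $(k_{i+1}\square\cdots\square k_i,1)^{\star}$ according to the final connector, and finishes with Lemma \ref{111}, while you perform the same splitting and the same two applications of Lemma \ref{111} starting from the right-hand side. The key identity $T_1^{(i)}+T_2^{(i)}=\sum_{m}(-1)^m\sum_{\#(+)=m}(k_{i+1}\square\cdots\square k_i,1)^{\star}$ and the cyclic bookkeeping are exactly the steps in the paper's proof, so no further comparison is needed.
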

\begin{proof}
By Lemma \ref{112} and an easy calculation, we have
\begin{align*}
\textrm{L.H.S.}
&=\sum_{m=0}^{r-1}(-1)^m\sum_{i=1}^{r}\sum_{ \substack{\square =,\textrm{ or } + \\ \#(+)=m} }
(k_{i+1}\square \cdots \square k_r \square k_1 \square \cdots \square k_i, 1)^{\star}\\
&=\sum_{i=1}^r\sum_{m=0}^r(-1)^m \sum_{ \substack{\square =,\textrm{ or } + \\ \#(+)=m} }
(k_{i+1} \square \cdots k_r \square k_1 \square \cdots \square k_i \square 1)^{\star}\\
&\quad+\sum_{i=1}^r \sum_{m=0}^{r-1} (-1)^m \sum_{ \substack{\square =,\textrm{ or } + \\ \#(+)=m} }
(k_{i+1} \square \cdots k_r \square k_1 \square \cdots \square k_{i-1}\square k_i+1)^{\star}.
\end{align*}
By Lemma \ref{111}, this coincides with
\begin{align*}
\sum_{i=1}^r\bigl\{(k_{i+1}, \ldots, k_r, k_1, \ldots, k_i, 1)+(k_{i+1}, \ldots, k_r, k_1, \ldots, k_{i-1}, k_i+1)\bigr\}=\textrm{R.H.S.},
\end{align*}
which completes the proof.
\end{proof} 
\begin{proof}[Proof of Theorem \ref{CSF for tSMZV}]
For an index $\bk=(k_1, \ldots, k_r)$, set
\begin{align*}
&\CSF_{\widehat{\cS}}(\bk)\\
&=\sum_{i=1}^r \sum_{j=0}^{k_i-2} \zeta_{\widehat{\cS}} (j+1, k_{i+1}, \ldots, k_r, k_1, \ldots, k_{i-1}, k_i-j) \\
 &\quad -\sum_{i=1}^r \sum_{j=0}^\infty 
  \left(
   \zeta_{\widehat{\cS}} (k_i+j+1, k_{i+1},  \ldots, k_r, k_1, \ldots, k_{i-1})
   +\zeta_{\widehat{\cS}} (j+1, k_{i+1}, \ldots, k_r, k_1, \ldots, k_i) 
  \right) t^j \\
  &\quad -\sum_{i=1}^r \zeta_{\widehat{\cS}} (k_{i+1}, \ldots, k_r, k_1, \ldots,k_{i-1},  k_i+1)
\end{align*}
and 
\begin{align*}
&\CSF^{\star}_{\widehat{\cS}}(\bk)\\
&=\sum_{i=1}^{r}\sum_{j=0}^{k_i-2}
\zeta^{\star}_{\widehat{\cS}}(j+1, k_{i+1}, \ldots, k_r, k_1, \ldots, k_{i-1}, k_i-j)\\
&\quad -\sum_{i=1}^{r}\sum_{j=0}^{\infty}
\zeta^{\star}_{\widehat{\cS}}(j+1, k_{i+1}, \ldots, k_r, k_1, \ldots, k_i)t^j
-\wt(\bk)\zeta^{\star}_{\widehat{\cS}}(\wt(\bk)+1).
\end{align*}
Note that by Theorem \ref{cycShat}, we have $\CSF^{\star}_{\widehat{\cS}}(\bk)=0$ for any non-empty index $\bk$. Therefore, by Propositions \ref{prop1}, \ref{prop2}, and \ref{prop3}, we see that
\begin{align*}
\CSF_{\widehat{\cS}}(\bk)=\sum_{m=0}^{r-1}(-1)^m \sum_{(l_1, \ldots, l_{r-m}) \in S_m(\bk)}
\CSF^{\star}_{\widehat{\cS}}(l_1, \ldots, l_{r-m})=0,
\end{align*} 
which completes the proof of Theorem \ref{CSF for tSMZV}.
\end{proof}

\section*{Acknowledgement}
The authors are grateful to Prof. Shuji Yamamoto for his valuable comments to the proof of Theorem \ref{main1}. The authors are also grateful to Dr. David Jarossay for informing us about his work \cite{Jar19}.

\end{document}